\newcommand{\R}{\mathbb{R}}
\newcommand{\E}{\mathbf{E}}
\renewcommand{\P}{\mathbf{P}}
\newcommand{\I}{\mathbb{I}}
\newcommand{\rvin}[1]{{\color{red}{#1}}}
\renewcommand{\rvin}[1]{#1}
\newcommand{\rvout}[1]{{\color{red}{\sout{#1}}}}
\renewcommand{\rvout}[1]{}
\newcommand{\cf}[1]{\\\fbox{\begin{minipage}{0.95\linewidth}{\large\textup{\texttt{[SUPPLEMENT]}}} #1\end{minipage}}\\} 
\renewcommand{\cf}[1]{} 
\newcommand{\what}[1]{{{#1}}}
\newcommand\hcancel[2][red]{\setbox0=\hbox{$#2$}%
\rlap{\raisebox{.25\ht0}{\textcolor{#1}{\rule{\wd0}{1pt}}}}#2} 
\renewcommand\hcancel[2][red]{}
\newcommand{\vertiii}[1]{{\left\vert\kern-0.25ex\left\vert\kern-0.25ex\left\vert #1 
    \right\vert\kern-0.25ex\right\vert\kern-0.25ex\right\vert}}
\numberwithin{equation}{section}
\theoremstyle{plain}
\newtheorem{theorem}{Theorem}[section]
\newtheorem{proposition}{Proposition}[section]
\newtheorem{remark}{Remark}
\newtheorem{claim}{Claim}
\begin{document}

\title{Lyapunov Conditions for Differentiability of Markov Chain Expectations: the Absolutely Continuous Case}


\author{Chang-Han Rhee$^{1}$
        \and
        Peter Glynn$^{2}$
}
\footnotetext[1]{Stochastics Group, Centrum Wiskunde \& Informatica, Science Park 123, Amsterdam, North Holland, 1098 XG, The Netherlands}
\footnotetext[2]{Management Science \& Engineering, Stanford University, 500 W. 120 Street, New York, New York 10027, USA
}



\date{\today}

\maketitle

\begin{abstract}
We consider a family of Markov chains whose transition dynamics are affected by model parameters.
Understanding the parametric dependence of (complex) performance measures of such Markov chains is often of significant interest. 
The derivatives of the performance measures w.r.t.\ the parameters play important roles, for example, in numerical optimization of the performance measures, and quantification of the uncertainties in the performance measures when there are uncertainties in the parameters from the statistical estimation procedures.
In this paper, we establish conditions that guarantee the differentiability of various types of intractable performance measures---such as the stationary and random horizon discounted performance measures---of general state space Markov chains and provide probabilistic representations for the derivatives. 

\smallskip
\noindent \textbf{Keywords} \ Markov chain $\cdot$ sensitivity analysis $\cdot$ derivative estimation

\smallskip
\noindent \textbf{Mathematics Subject Classification} {\ 60J05}
\end{abstract}

\section{Introduction}\label{sec:intro}
Let $X=(X_n: n\geq 0)$ be a Markov chain taking values in a state space $S$. For the purpose of this paper, the state space $S$ may be discrete or continuous. In many applications settings, it is natural to consider the behavior of $X$ as a function of a parameter $\theta$ that affects the transition dynamics of the process. In particular, suppose that for each $\theta$ in some open neighborhood of $\theta_0\in \R^d$, $P(\theta) = (P(\theta, x, dy): x,y\in S)$ defines the one-step transition kernel of $X$ associated with parameter choice $\theta$. In such a setting, computing the derivative of some application-specific expectation is often of interest. 

Such derivatives play a key role when one is numerically optimizing an objective function, defined as a Markov chain's expected value, over the decision parameter $\theta$. In addition, such derivatives describe the sensitivity of the expected value under consideration to perturbations in $\theta$. Such sensitivities are valuable in statistical applications, and arise when one applies (for example) the ``delta method'' in conjunction with estimating equations involving some expectation of the observed Markov chain; see, for example, \cite{lehmann06}. More generally, sensitivity analysis is important when one is interested in understanding how robust the model is to uncertainties in the input parameters.

In particular, suppose that $\theta$ is a vector of statistical parameters, and that a data set of size $n$ has been collected to estimate the underlying true parameter $\theta^*$. In significant generality, the associated estiamtor $\hat \theta_n$ for $\theta^*$ will satisfy a central limit theorem (CLT) of the form 
$$
n^{1/2} (\hat \theta_n - \theta^*) \Rightarrow N(0,C)
$$
as $n\to \infty$, where $\Rightarrow$ denotes weak convergence and $N(0,C)$ is a normally distributed random column vector with mean $0$ and covariance matrix $C$; see, for example, \cite{ibragimov81}. 
In many applications, one wishes to understand how the uncertainty in our estimator $\hat \theta_n$ of $\theta^*$ propagates through the model associated with $X$ to produce uncertainty in output measures of interest. 
Suppose, for example, that the decision-maker focuses her attention on a performance measure of the form $\alpha(\theta) = \E_\theta Z$, where $Z$ is some appropriately chosen random variable (rv) and $\E_\theta(\cdot)$ is the expectation operator under which $X$ evolvoes according to $\P(\theta)$. 
If $\alpha(\cdot)$ is differentiable at $\theta^*$, then
$$
n^{1/2}(\alpha(\hat \theta_n)-\alpha(\theta^*)) \Rightarrow \nabla \alpha(\theta^*) N(0,C)
$$
as $n\to \infty$, where $\nabla\alpha(\theta)$ is the (row) gradient vector evaluated at $\theta$; see p.122 of \cite{serfling80}. If, in addition, $\nabla \alpha(\cdot)$ is continuous at $\theta^*$ and $C$ can be consistently estimated from the observed data via an estimator $C_n$, the interval 
\begin{equation}\label{eqn_1_1}
\left[\alpha(\hat \theta_n) - z\frac{\sigma_n}{\sqrt{n}},\ \alpha(\hat \theta_n) + z\frac{\sigma_n}{\sqrt{n}}\right]
\end{equation}
is an asymptotic $100(1-\delta)$\% confidence interval for $\alpha(\theta^*)$ (provided $\nabla \alpha(\theta^*)C\nabla \alpha(\theta^*)^T > 0$), where $z$ is chosen so that $P(-z\leq N(0,1)\leq z) = 1-\delta$ and $\sigma_n = \sqrt{\nabla \alpha(\hat \theta_n)C_n \nabla \alpha(\hat \theta_n)^T}$. The confidence interval (\ref{eqn_1_1}) provides the modeler with the desired sensitivity and robustness of the model described by $X$ to the statistical uncertainties present in the estiamtion of $\theta^*$. Of course, this approach rests on the differentiability of $\alpha(\cdot)$ and on one's ability to compute the gradient. This paper provides conditions guaranteeing differentiability in the general state space Markov chian settings and provides representations for those derivatives suitable for computation. 

The problem of determining such differentiability has a long history and has been addressed through various approaches including weak differentiation (\citealp{vazquez1992, pflug1992}), likelihood ratio \citep{glynn95}, measure-valued differentiation (\citealp{heidergott06}, \linebreak\citealp{heidergott2006B}), and derivative regenration (\citealp{glasserman1993}). 
However, most of the previous approaches are limited to special classes of problems. For example, the results in \linebreak\cite{vazquez1992} and \cite{pflug1992} are limited to bounded performance functionals; 
\cite{glasserman1993} impose special structures in the the transition dynamics of the Markov chains and their parametrization;
\cite{glynn95} assumes for random horizon expectations that the associated stopping times have finite exponential moments, and for stationary expectations that the Markov chain is geometrically ergodic.
\cite{heidergott2006B} provide weaker conditions for random horizon performance measures based on measure-valued differentiation approach, but their sufficient conditions are difficult to verify in general and still require that the associated stopping times possess finite (at least) second moment.
Also based on measure-valued differentiation, \cite{heidergott06} study stationary expectations. However, the sufficient conditions verifiable based on the model building blocks in the paper require geometric ergodicity of the Markov chain.
In this paper, on the other hand, we provide easily verifiable sufficient conditions for random horizon expectations that do not require any moment conditions for the associated stopping times---hence, allowing even infinite horizon expectations. 
For stationary expectations, we provide (again, easily verifiable) sufficient conditions that does not require geometric ergodicity. 
We illustrate the sharpness of our differentiability criteria with the example of waiting times of G/G/1 queues with heavy tailed service times.

The rest of the paper is organized as follows. 
Section 2 develops a preliminary theory for both random-horizon expectations and stationary expectations based on simple and clean operator theoretic arguments. 
Section 3 provides more general criteria for differentiability of random horizon expectations based on the stochastic Lyapunov type inequalities arguments.
In Section 4, also taking Lyapunov inequalities approach, we establish the differentiability criteria for stationary expectations. 

%
%
\section{Operator-theoretic Criteria for Differentiability}\label{sec:operator}

We start by studying differentiability in a setting in which one can use operator arguments to establish existance of derivatives. In this operator setting, the proofs and theorem statements are especially straightforward.

Consider a Markov chain $X=(X_n:n\geq 0)$ living on state space $S$, with one step transition kernel $P=(P(x,dy): x,y\in S)$, where
$$
	P(x,dy) = P(X_{n+1}\in dy|X_n = x)
$$
for $x,y\in S$. We focus first on expectations of the form 
\begin{equation}\label{def2:u_star}
	u^*(x) = \E_x \sum_{j=0}^{T-1} \exp(\sum_{k=0}^{j-1} g(X_k)) f(X_j) + \exp(\sum_{k=0}^{T-1}g(X_k)) f(X_T),
\end{equation}
where $T = \inf\{n\geq 0: X_n \in C^c\}$ is the first hitting time of the ``target set'' $C^c\subseteq U$, $f:S\to\R_+$, $g:S\to \R$, and $\E_x(\cdot)\triangleq \E(\cdot|X_0=x)$. 

In (\ref{def2:u_star}), we permit the possibility that $C^c = \phi$, in which case $T=\infty$ a.s., and $u^*$ is then to be interpreted as the ``infinite horizon discounted reward''
\begin{align*}
u^*(x) = \E _x \sum_{j=0}^\infty \exp\left(\sum_{k=0}^{j-1} g(X_k)\right) f(X_j).
\end{align*}
In addition to subsuming infinite horizon discounted rewards, (\ref{def2:u_star}) also includes expected hitting times ($g\equiv 0, f=1$ on $C$ and $f=0$ on $C^c$), exit probabilities ($g\equiv 0$, $f=0$ on $C$, and $f(x) = I(x\in B)$ for $x\in C^c$, when one is considering $P(X_T\in B | X_0 = x)$), and many other natural Markov chain expectations. 

It is easy to verify that 
\begin{equation}\label{eq:u_star}
u^* = \sum_{n=0}^\infty K^n \tilde f
\end{equation}
where $K = (K(x,dy): x, y\in C)$ is the non-negative kernel for which 
\begin{equation}\label{eq2:2.3}
K(x,dy) = \exp(g(x)) P(x,dy)
\end{equation}
for $x,y\in S$, and 
\begin{equation*}
\tilde f(x) = f(x) + \int_{C^c} \exp(g(x)) P(x,dy)f(y)
\end{equation*}
for $x \in C$. Here, we are taking advantage in (\ref{def2:u_star}) of the (common) notational convention that for a function $h:B \to \R$, a measure $\eta$ on $B$, and kernels $Q_1$ and $Q_2$ on $B$, the scalar $\eta h$, the function $Q_1 h$, the measure $\eta Q_1$, and the kernel $Q_1 Q_2$ are respectively defined via

	$$\eta h = \int_B h(y) \eta(dy),$$

	$$(Q_1 h)(x) = \int_B h(y) Q_1(x,dy),$$

	$$(\eta Q_1)(A) = \int_B \eta(dx) Q_1 (x,A),$$

	$$(Q_1 Q_2)(x,A) = \int_B Q_1(x,dy) Q_2(y,A),$$
whenever the right-hand sides are well-defined. Furthermore, we define the kernels $Q^n$ via  $Q^0 (x,dy) = \delta_x(dy)$ (where $\delta_x( . )$ is a unit point mass at $x$), and $Q^n = Q Q^{(n-1)}$ for $n\geq1$.

Our goal is to use operator-theoretic tools to study the differentiability of (\ref{eq:u_star}). To this end, we start by defining the appropriate linear spaces that underlie this approach. Given a measurable space $(B, \mathcal B)$, measurable $w:B\to[1,\infty)$, and $h:B\to\R$, let $\|h\|_w = \sup\{ |h(x)|/w(x): x\in B\}$ and $L_w=\{h\in L:\|h\|_w < \infty\}$ where $L$ is the set of measurable functions. For a linear operator $Q:L_w\to L_w$ and a functional $\eta:L_w\to \R$, set
$$\vertiii{Q}_w = \sup_{h\in L_w: \|h\|_w \neq 0} \frac{\|Qh\|_w}{\|h\|_w}$$ 
and
$$\|\eta\|_{w} = \sup\{|\eta h|: h\in L_w, \|h\|_w \leq 1\}.$$
Then, let , $\mathcal L_w = \{Q\in \mathcal L: \vertiii{Q}_w < \infty\}$, and $\mathcal M_w = \{\eta\in \mathcal M: \|\eta\|_w < \infty\}$ where $\mathcal L$ and $\mathcal M$ are the sets of kernels, and measures, respectively. 
Each of the spaces $L_w$, $\mathcal L_w$, and $\mathcal M_w$ are Banach spaces under their respective norms and addition / scalar multiplication operations; see Appendix~\ref{appendix:B}.
Furthermore, for $Q_1, Q_2 \in \mathcal L_w$, $h\in L_w$, and $\eta \in \mathcal M_w$, it is easy to show that
\begin{equation}\label{eq:2.4}
\vertiii{Q_1Q_2}_w \leq \vertiii{Q_1}_w\cdot\vertiii{Q_2}_w
\end{equation}
and
\begin{equation}\label{eq:2.5}
\begin{aligned}
\|Qh\|_w &\leq \vertiii{Q}_w \cdot\|h\|_w,\\
\|\eta Q\|_w &\leq \|\eta\|_w \cdot \vertiii{Q}_w,\\
|\eta h| &\leq \|\eta\|_w \cdot \|h \|_w;
\end{aligned}
\end{equation}
see, for example, \cite{Dunford71} for the special case $w\equiv 1$. 
In view of (\ref{eq:2.4}), if $\vertiii{Q^m}_w < 1$ for some $m\geq 1$, then $(I-Q)$ is invertible on $\mathcal L_w$ and 
$$ (I-Q)^{-1} = \sum_{n=0}^\infty Q^n.$$

Given a parametrized family of kernels $(Q(\theta)\in \mathcal L_w: \theta \in (a,b))$, we say that $Q(\cdot)$ is \emph{continuous} in $\mathcal L_w$ at $\theta_0 \in (a,b)$  if 
$\vertiii{Q(\theta_0 + h)-Q(\theta_0) }_w \to 0$
as $h\to 0$,  
and  \emph{differentiable} in $\mathcal L_w$ at $\theta_0 \in (a,b)$ with derivative $Q'(\theta_0)$ if there exists a kernel $Q'(\theta_0) \in \mathcal L_w$ for which 
$$\vertiii{\frac{Q(\theta_0 + h)-Q(\theta_0)}{h} - Q'(\theta_0)}_w \to 0$$
as $h\to 0$. 
If $Q(\cdot)$ is differentiable in a neighborhood of $\theta_0$ with derivative $Q'(\cdot)$, and $Q'(\cdot)$ is continuous in $\mathcal L_w$, then we say that $Q(\cdot)$ is continuously differentiable at $\theta_0$.
Similarly, given families $(f(\theta)\in L_w: \theta\in (a,b))$ and $(\eta(\theta) \in \mathcal M_w: \theta\in (a,b))$, we say that $f(\cdot)$ is 
\emph{continuous} in $ L_w$ at $\theta_0$ if 
$\left\|f(\theta_0+h) - f(\theta_0)\right\|_w \to 0$
as $h\to 0$, 
and \emph{differentiable} in $ L_w$ at $\theta_0$ if there exists $f'(\theta_0)\in L_w$ such that 
$$\left\|\frac{f(\theta_0+h) - f(\theta_0)}{h} - f'(\theta_0)\right\|_w \to 0$$
as $h\to 0$; and $\eta(\cdot)$ is 
\emph{continuous} in $\mathcal M_w$ at $\theta_0$ if 
$ \left\|\eta(\theta_0 + h) - \eta(\theta_0)\right\|_w \to 0$
as $h\to0$,
and differentiable in $\mathcal M_w$ at $\theta_0$ if there exists $\eta'(\theta_0) \in \mathcal M_w$ such that 
$$ \left\|\frac{\eta(\theta_0 + h) - \eta(\theta_0)}{h} - \eta'(\theta_0)\right\|_w \to 0$$
as $h\to0$. As in $\mathcal L_w$, if $f(\cdot)$ and $\eta(\cdot)$ are differentiable and their derivatives are continuous at $\theta_0$ in $L_w$ and $\mathcal M_w$ respectively, we say that they are continuously differentiable.

Assuming that $(Q(\theta): \theta \in (a,b))$ is $n$-times differentiable in some neighborhood $\mathcal N$ of $\theta_0$, with derivative $(Q^{(n)}(\theta):\theta\in\mathcal N)$, we say that $Q(\cdot)$ is $(n+1)$-times differentiable in $\mathcal L_w$ at $\theta_0$ if $(Q^{(n)}(\theta): \theta\in \mathcal N)$ is differentiable at $\theta_0$, with corresponding derivative $Q^{(n+1)}(\theta_0)$. 
We can analogously define $f^{(n+1)}(\theta_0)$ and $\eta^{(n+1)}(\theta_0)$ in the spaces $L_w$ and $\mathcal M_w$, respectively.
(We restrict our discussion in this paper to scalar $\theta$, since the vector case introduces no new mathematical issues.)

We can now state our first result, pertaining to differentiability of $u^*$. 
\begin{theorem}\label{thm:01} 
Suppose there exists $w:C\to[1,\infty)$ and $\theta_0 \in (a,b)$ for which:
\begin{enumerate}
\item[(a)] $\vertiii{K^m(\theta_0)}_w < 1$ for some $m\geq 1$;

\item[(b)] $K(\cdot)$ is \what{(continuously)} differentiable in $\mathcal L_w$ at $\theta_0$, with derivative $K'(\theta_0)$; 

\item[(c)] $\tilde f(\cdot)$ is \what{(continuously)} differentiable in $L_w$ at $\theta_0$, with derivative $\tilde f'(\theta_0)$. 
\end{enumerate}
Then:
\begin{enumerate}
\item[(i)] $(I-K(\theta))$ is invertible on $L_w$ for $\theta$ in a neighborhood of $\theta_0$;

\item[(ii)] Setting $G(\theta) = (I-K(\theta))^{-1}$, $G(\cdot)$ is \what{(continuously)} differentiable in $\mathcal L_w$ at $\theta_0$, and 
\begin{equation*}
G'(\theta_0) = G(\theta_0) K'(\theta_0) G(\theta_0);
\end{equation*}

\item[(iii)] $u^*(\theta) = \sum_{n=0}^\infty K^n(\theta) \tilde f(\theta)$ is \what{(continuously)} differentiable in $L_w$ at $\theta_0$, with 
\begin{equation}\label{eq:4.3a}
(u^*)'(\theta_0) = G'(\theta_0) \tilde f(\theta_0) + G(\theta_0)\tilde f'(\theta_0).
\end{equation}
\end{enumerate}
If, in addition, $K(\cdot)$ and $\tilde f(\cdot)$ are $n$-times \what{(continuously)} differentiable in $\mathcal L_w$ and $L_w$, respectively, at $\theta_0$, then $Q(\cdot)$ and $u^*(\cdot)$ are $n$-times \what{(continuously)} differentiable at $\theta_0$ in $\mathcal L_w$ and $L_w$, respectively, and $Q^{(n)}(\theta_0)$ and $(u^*)^{(n)}(\theta_0)$ can be recursively computed via
	\begin{align}\label{eq2:D}
	G^{(n)}(\theta_0) = \sum_{j=0}^{n-1} {n\choose j}G^{(j)}(\theta_0)K^{(n-j)}(\theta_0) G(\theta_0)
	\end{align} 
and
	\begin{align}\label{eq2:E}
	(u^*)^{(n)}(\theta_0) = G(\theta_0) \bigg(\tilde f^{(n)}(\theta_0) + \sum_{j=0}^{n-1}{n\choose j} K^{(n-j)} (\theta_0) (u^*)^{(j)}(\theta_0)\bigg)
	\end{align}
where, as usual, $K^{(0)}(\theta)\equiv K(\theta)$ and $\tilde f^{(0)}(\theta) = \tilde f(\theta)$.
\end{theorem}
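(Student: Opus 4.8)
The plan is to exploit the fact that $\mathcal L_w$ is a Banach algebra with submultiplicative norm (\eqref{eq:2.4}) and unit $I$, so that the whole argument reduces to the resolvent calculus in that algebra, with the bounds in \eqref{eq:2.5} used to transport limits through products that also involve factors in $L_w$. For (i), differentiability of $K(\cdot)$ in $\mathcal L_w$ forces continuity there, and since multiplication is jointly continuous in a Banach algebra the map $\theta\mapsto K^m(\theta)$ is continuous at $\theta_0$; hence $\vertiii{K^m(\theta)}_w\to\vertiii{K^m(\theta_0)}_w<1$, so on some neighborhood $\vertiii{K^m(\theta)}_w\le\rho<1$. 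The Neumann-series remark following \eqref{eq:2.5} then gives $G(\theta)\triangleq(I-K(\theta))^{-1}=\sum_{n\ge0}K^n(\theta)$, and splitting $n$ by its residue modulo $m$ yields a \emph{uniform} bound $\sup_\theta\vertiii{G(\theta)}_w<\infty$ over that neighborhood, which I will use repeatedly.

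For (ii) I would start from the resolvent identity
\[
G(\theta_0+h)-G(\theta_0)=G(\theta_0+h)\bigl(K(\theta_0+h)-K(\theta_0)\bigr)G(\theta_0),
\]
obtained by multiplying $G(\theta_0+h)-G(\theta_0)$ on the left and right by the appropriate factors of $(I-K)$. Combined with the uniform bound and continuity of $K$, this first yields continuity of $G(\cdot)$; dividing by $h$ and letting $h\to0$, the difference quotient of $K$ tends to $K'(\theta_0)$ by (b) while $G(\theta_0+h)\to G(\theta_0)$, so \eqref{eq:2.4} lets me pass to the limit in the product and conclude $G'(\theta_0)=G(\theta_0)K'(\theta_0)G(\theta_0)$. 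For (iii) I write $u^*(\theta)=G(\theta)\tilde f(\theta)$ (from \eqref{eq:u_star}) and prove a product rule for an $\mathcal L_w$-valued factor times an $L_w$-valued factor by the usual add-and-subtract decomposition; the two resulting terms converge, using \eqref{eq:2.5} and continuity of $G$ and $\tilde f$, to $G'(\theta_0)\tilde f(\theta_0)+G(\theta_0)\tilde f'(\theta_0)$.

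For the higher-order claims the engine is the pair of identities $G=I+GK$ and $u^*=\tilde f+Ku^*$ (the latter being just $(I-K)u^*=\tilde f$), valid throughout the neighborhood from (i). After establishing a finite-factor Leibniz rule in $\mathcal L_w$ by induction from the product rule, I differentiate these $n$ times. For $G$, Leibniz gives $G^{(n)}=\sum_{j=0}^{n}\binom{n}{j}G^{(j)}K^{(n-j)}$; isolating the $j=n$ term $G^{(n)}K$ and right-multiplying by $G$ produces \eqref{eq2:D}. For $u^*$, differentiating $u^*=\tilde f+Ku^*$, isolating the $j=n$ term $K(u^*)^{(n)}$, and left-multiplying by $G$ produces \eqref{eq2:E}. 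Existence of all the derivatives is carried along in the same induction: the right-hand sides of \eqref{eq2:D} and \eqref{eq2:E} involve only lower-order derivatives of $G$ and $u^*$ together with derivatives of $K$ and $\tilde f$, so the product rule upgrades $n$-fold to $(n+1)$-fold differentiability.

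The step I expect to require the most care is the neighborhood bookkeeping in this last induction: to even speak of $G^{(n)}(\theta_0)$ one needs $G^{(n-1)}$ to exist on a full neighborhood of $\theta_0$ rather than at the single point, so each differentiability statement in the induction must be local. This is precisely what the uniform bound $\sup_\theta\vertiii{G(\theta)}_w<\infty$ from (i) supplies, once the hypothesis ``$n$-times differentiable at $\theta_0$'' is read in the paper's sense (which already presupposes $(n-1)$-fold differentiability on a neighborhood). Everything past that point is a routine application of the submultiplicative inequalities \eqref{eq:2.4}--\eqref{eq:2.5}.
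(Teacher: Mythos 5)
Your proposal is correct and follows essentially the same route as the paper: the resolvent identity $G(\theta_0+h)-G(\theta_0)=G(\theta_0+h)\bigl(K(\theta_0+h)-K(\theta_0)\bigr)G(\theta_0)$ together with the uniform bound $\sup_{\theta\in\mathcal N}\vertiii{G(\theta)}_w<\infty$ for (i)--(ii), the add-and-subtract product decomposition for (iii), and the Leibniz/induction argument for the recursions \eqref{eq2:D}--\eqref{eq2:E} (which the paper states is ``very similar and therefore omitted''). The only point you leave implicit is the parenthetical ``(continuously)'' differentiable assertions, but these follow at once from your own setup, since your derivative formulas hold throughout the neighborhood and all factors appearing in them are continuous there.
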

\begin{proof}{Proof.}
Part (i) is \rvout{well known: see for example,  \what{[reference]}}\rvin{obvious}. For part (ii), note that assumptions (a) and (b) imply that there exists a neighborhood $\mathcal N$ of $\theta_0$ for which $\sup_{\theta\in \mathcal N}\vertiii{K^m(\theta)}_w < 1$ and $\sup_{\theta\in \mathcal N} \vertiii{K(\theta)}_w < \infty$, from which it follows that $\sup_{\theta\in \mathcal N}\vertiii{G(\theta)}_w < \infty$. 
Furthermore, since $(I-K(\theta_0+h)) G(\theta_0+h) = G(\theta_0+h) (I-K(\theta_0+h)) = I$, evidently 
	\begin{equation*}
	(G(\theta_0+h) - G(\theta_0)) (I-K(\theta_0)) = G(\theta_0+h) (K(\theta_0+h) - K(\theta_0)),
	\end{equation*}
so that
	\begin{equation}\label{eq2:2.9}
	G(\theta_0 + h) - G(\theta_0) = G(\theta_0 + h) (K(\theta_0+h) - K(\theta_0)) G(\theta_0)
	\end{equation}
Clearly, this implies that $\vertiii{G(\theta_0+h) - G(\theta_0)}_w \leq \vertiii{G(\theta_0+h)}_w\vertiii{K(\theta_0+h)- K(\theta_0)}_w\vertiii{G(\theta_0)}_w \to 0$ as $h \to 0$, so $G(\cdot)$ is continuous in $\mathcal L_w$ at $\theta_0$. Consequently, (\ref{eq2:2.9}) implies that $G(\cdot)$ is differentiable in $\mathcal L_w$ at $\theta_0$, with $G'(\theta_0) = G(\theta_0) K'(\theta_0)G(\theta_0)$. \what{In case $K'$ is continuous, continuity of $G'$ is also immediate from this expression.}

For part (iii), the result follows analogously from the identity
	\begin{equation*}
	u^*(\theta_0+h) - u^*(\theta_0) = G(\theta_0) (\tilde f(\theta_0+h) - \tilde f(\theta_0)) + (G(\theta_0+h) - G(\theta_0)) \tilde f(\theta_0+h).
	\end{equation*}
The proof for the $n$-fold derivatives for $n\geq 2$ is very similar and therefore omitted.
\end{proof}

\begin{remark}
Suppose that $K(\cdot)$ posesses a density $(k(\cdot, x, y): x,y \in C)$ that is $n$-times differentiable (with (pointwise) derivative $(k^{(n)}(\cdot, x,y):x,y\in C))$. For $\epsilon>0$ and $0\leq j \leq n$, let $\tilde \omega_\epsilon^{(j)}(x,y) = \sup_{|\theta-\theta_0|<\epsilon} |k^{(j)}(x, y)|$. Then, the conditions 
	\begin{align}
	&\sup_{x\in C} \int_C K^m(\theta_0, x, dy) \frac{w(y)}{w(x)} < 1\quad \text{for some } m\geq 1,\label{eq:02.10}\\
	&\sup_{x\in C} \int_C \omega_\epsilon^{(j)}(x,y)\frac{w(y)}{w(x)} K(\theta_0, x, dy) < \infty,\label{eq:02.11}\\
	&\sup_{x\in C} \int_{C^c} (1+\tilde \omega_\epsilon^{(j)}(x,y))\frac{|f(y)|}{w(x)}K(\theta_0, x, dy)  < \infty,\label{eq:02.12}
	\end{align}
for $j=0,\ldots,n$ imply (a), (b), and (c) of Theorem~\ref{thm:01} implying the validity of (\ref{eq2:D}) and (\ref{eq2:E}).
\end{remark}


There is an analgous differentiability results for measures. For a given initial distribution $\mu$ on $C$, let $\nu$ be the measure defined by 
\begin{equation*}
\nu(dy) = \E_\mu \sum_{j=0}^{T-1} \exp \left(\sum_{k=0}^{j-1} g(X_k)\right) \I(X_j \in dy)
\end{equation*}
for $y\in S$, where $\E_\mu(\cdot) \triangleq \int_C \mu(dx) \E_x(\cdot)$. Then, 
\begin{equation*}
\nu = \sum_{n=0}^\infty \mu K^n,
\end{equation*}
where $K$ is defined as in (\ref{eq2:2.3}). Assume that $\mu(\cdot)$ and $K(\cdot)$ now depend on the parameter $\theta$ (so that $\nu$ does as well). The following result has a proof identical to that of Theorem~\ref{thm:01}, and is therefore omitted. 

\begin{theorem}\label{thm:02}
Suppose there exists $w:C\to[1,\infty)$ and $\theta_0 \in (a,b)$ for which:
\begin{itemize}
	\item[(a)] $\vertiii{K^m(\theta_0)}_w < 1$ for some $m\geq 1$; 
	\item[(b)] $K(\cdot)$ is (continuously) differentiable in $\mathcal L_w$ at $\theta_0$, with derivative $K'(\theta_0)$
	\item[(c)] $\mu(\cdot)$ is (continuously) differentiable in $\mathcal M_w$ at $\theta_0$, with derivative $\mu'(\theta_0)$. 
\end{itemize}
Then, $\nu(\theta) = \sum_{n=0}^\infty \mu(\theta)K^m(\theta)$ is (continuously) differentiable in $\mathcal M_w$ in $\theta_0$, with 
\begin{equation*}
\nu'(\theta_0) = \mu'(\theta_0)G(\theta_0) + \nu(\theta_0) G'(\theta_0).
\end{equation*}
If, in addition, $K(\cdot)$ and $\mu(\cdot)$ are $n$-times (continuously) differentiable in $\mathcal L_w$ and $\mathcal M_w$, respectively, at $\theta_0$, then $\nu(\cdot)$ is $n$-times (continuously) differentiable in $\mathcal M_w$, and $\nu^{(n)}(\theta_0)$ can be recursively computed via 
\begin{equation*}
\nu^{(n)} (\theta_0) = \left(\mu^{(n)}(\theta_0) + \sum_{j=0}^{n-1}{n \choose j}\nu^{(j)}K^{(n-j)}(\theta_0)\right)G(\theta_0).
\end{equation*}

\end{theorem}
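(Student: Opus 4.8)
The plan is to exploit the factorization $\nu(\theta) = \mu(\theta)G(\theta)$, where $G(\theta) = (I-K(\theta))^{-1} = \sum_{n=0}^\infty K^n(\theta)$, and to reduce the whole statement to a product rule in the Banach-space setting, mirroring the proof of Theorem~\ref{thm:01}. First I would invoke assumptions (a) and (b): by the argument already used for parts (i)--(ii) of Theorem~\ref{thm:01}, there is a neighborhood $\mathcal N$ of $\theta_0$ on which $G(\theta)$ exists, $\sup_{\theta\in\mathcal N}\vertiii{G(\theta)}_w < \infty$, and $G(\cdot)$ is (continuously) differentiable in $\mathcal L_w$ at $\theta_0$ with $G'(\theta_0) = G(\theta_0)K'(\theta_0)G(\theta_0)$. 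Assumption (c) makes $\mu(\cdot)$ differentiable, hence continuous, in $\mathcal M_w$ at $\theta_0$, so after shrinking $\mathcal N$ we also have $\sup_{\theta\in\mathcal N}\|\mu(\theta)\|_w < \infty$.

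For the first-order claim I would start from the telescoping identity
\begin{equation*}
\nu(\theta_0+h) - \nu(\theta_0) = \big(\mu(\theta_0+h) - \mu(\theta_0)\big)G(\theta_0) + \mu(\theta_0+h)\big(G(\theta_0+h) - G(\theta_0)\big),
\end{equation*}
divide by $h$, and let $h\to 0$. The decisive tool is the continuity of the bilinear multiplication $(\eta,Q)\mapsto \eta Q$ from $\mathcal M_w\times\mathcal L_w$ into $\mathcal M_w$, quantified by $\|\eta Q\|_w \le \|\eta\|_w\vertiii{Q}_w$ in~(\ref{eq:2.5}). Applying this bound to each of the two difference-quotient terms, together with the uniform bounds from the first paragraph and the convergence of the difference quotients of $\mu$ and of $G$, shows that
\begin{equation*}
\nu'(\theta_0) = \mu'(\theta_0)G(\theta_0) + \mu(\theta_0)G'(\theta_0).
\end{equation*}
Substituting $G'(\theta_0) = G(\theta_0)K'(\theta_0)G(\theta_0)$ and using $\mu(\theta_0)G(\theta_0) = \nu(\theta_0)$ rewrites the second term as $\nu(\theta_0)K'(\theta_0)G(\theta_0)$, exhibiting $\nu'(\theta_0) = \big(\mu'(\theta_0) + \nu(\theta_0)K'(\theta_0)\big)G(\theta_0)$ as the $n=1$ instance of the claimed recursion. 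When $K'$ and $\mu'$ are continuous, $G'(\cdot) = GK'G$ is continuous, and the sum/product above is then continuous, giving continuous differentiability.

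For the $n$-fold statement, existence of $\nu^{(n)}(\theta_0)$ in $\mathcal M_w$ follows by induction from $\nu = \mu G$: once $K(\cdot)$ and $\mu(\cdot)$ are $n$-times differentiable, $G(\cdot)$ is $n$-times differentiable by the extension of Theorem~\ref{thm:01} (via~(\ref{eq2:D})), and the product of two $n$-times-differentiable Banach-space-valued maps under a continuous bilinear multiplication is again $n$-times differentiable by the product rule. To obtain the stated recursion, rather than expanding $\mu G$ directly I would differentiate the fixed-point identity $\nu = \mu + \nu K$---which follows from $\nu(I-K) = \mu G(I-K) = \mu$---$n$ times via Leibniz, giving
\begin{equation*}
\nu^{(n)}(\theta_0) = \mu^{(n)}(\theta_0) + \sum_{j=0}^{n}\binom{n}{j}\nu^{(j)}(\theta_0)K^{(n-j)}(\theta_0).
\end{equation*}
Isolating the $j=n$ term $\nu^{(n)}(\theta_0)K(\theta_0)$ and multiplying the remaining identity on the right by $G(\theta_0)$ then yields the claimed formula.

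The only genuine obstacle is bookkeeping inside the Banach-algebra structure rather than any hard estimate: one must keep every step in $\mathcal M_w$ and $\mathcal L_w$ so that the product rule and the submultiplicative bounds~(\ref{eq:2.4})--(\ref{eq:2.5}) apply, and one must secure the uniform bounds $\sup_{\theta\in\mathcal N}\vertiii{G(\theta)}_w<\infty$ and $\sup_{\theta\in\mathcal N}\|\mu(\theta)\|_w<\infty$ before passing to any limit. Granting these, the argument is a verbatim transcription of the proof of Theorem~\ref{thm:01}, with the right factor $\tilde f\in L_w$ there replaced by the left factor $\mu\in\mathcal M_w$ here, and left-multiplication by $G$ replaced by right-multiplication.
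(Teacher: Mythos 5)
Your proof is correct and follows essentially the same route as the paper: the paper omits the proof of this theorem, stating it is identical to that of Theorem~\ref{thm:01}, and your argument is exactly that proof transposed to the measure side (telescoping identity, the bound $\|\eta Q\|_w\le\|\eta\|_w\vertiii{Q}_w$, product rule, and Leibniz applied to the fixed-point identity $\nu=\mu+\nu K$ for the higher derivatives). One remark worth recording: your product rule yields $\nu'(\theta_0)=\mu'(\theta_0)G(\theta_0)+\mu(\theta_0)G'(\theta_0)=\bigl(\mu'(\theta_0)+\nu(\theta_0)K'(\theta_0)\bigr)G(\theta_0)$, whereas the theorem as printed displays $\nu'(\theta_0)=\mu'(\theta_0)G(\theta_0)+\nu(\theta_0)G'(\theta_0)$; since $\nu(\theta_0)G'(\theta_0)=\nu(\theta_0)G(\theta_0)K'(\theta_0)G(\theta_0)$ differs in general from $\nu(\theta_0)K'(\theta_0)G(\theta_0)$, the printed first-order formula is inconsistent with the paper's own recursion at $n=1$, and your version (with $\mu(\theta_0)G'(\theta_0)$ in place of $\nu(\theta_0)G'(\theta_0)$) is the correct one.
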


We finish this section with a short operator-theoretic argument establishing existence of a derivative for the stationary distribution under the assumption of geometric ergodicity (see condition (a) below, which is the key Lyapunov condition that implies geometric ergodicity in Chapter 15 of \cite{meyn09}).

\begin{theorem}\label{thm:4}
Suppose that there exists a subset $A\subseteq S$, $\epsilon,c > 0$, $\lambda, r \in (0,1)$, an integer $m\geq 1$, a probability measure $\varphi$ on $S$, and $w:S\to[1,\infty)$ such that:

\begin{enumerate}
\item[(a)] $(P(\theta_0) w)(x) \leq rw(x)  + cI(x\in A)$ \quad for $x\in S$; 
\item[(b)] $P^m(\theta,x,dy) \geq \lambda \varphi(dy)$ for $x\in A$, $y\in S$, and $|\theta-\theta_0|<\epsilon$;
\item[(c)] $P(\cdot)$ is \what{(continuously)} differentiable in $\mathcal L_w$ at $\theta_0$.
\end{enumerate}
Then, $X$ is positive Harris recurrent for $\theta$ in a neighborhood of $\theta_0$, and the stationary distributions $\pi(\theta)\in \mathcal M_w$ for $\theta$ in a neighborhood of $\theta_0$ are \what{(continuously)} differentiable in $\mathcal M_w$ at $\theta_0$. 
Furthermore, if $\Pi(\theta_0)$ is the kernel defined by $\Pi(\theta_0, x, dy) = \pi(\theta_0, dy)$ for $x,y\in S$, $(I-P(\theta_0)+ \Pi(\theta_0))$ has an inverse on $\mathcal L_w$ and 
	\begin{equation}\label{eq2:2.13}
	\pi'(\theta_0) = \pi(\theta_0) P'(\theta_0) (I-P(\theta_0)+\Pi(\theta_0))^{-1}.
	\end{equation}
If, in addition, $P(\cdot)$ is $n$-times \what{(continuously)} differentiable in $\mathcal L_w$ at $\theta_0$, then $\pi(\cdot)$ is $n$-times \what{(continuously)} differentiable in $\mathcal M_w$ at $\theta_0$, and $\pi^{(n)}(\theta_0)$ can be recursively computed via 
	\begin{equation*}
	\pi^{(n)}(\theta_0) = \sum_{j=0}^{n-1} {n \choose j} \pi^{(j)}(\theta_0) P^{(n-j)}(\theta_0) (I-P(\theta_0) + \Pi(\theta_0))^{-1}.
	\end{equation*}
\end{theorem}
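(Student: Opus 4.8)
The plan is to imitate the resolvent-style argument of Theorem~\ref{thm:01}, but organized around the \emph{non-invertible} operator $I-P(\theta_0)$: since a recurrent chain has $\pi(\theta)$ as a left null measure and the constant function $\mathbf 1\equiv 1$ as a right null function, $I-P(\theta)$ cannot be inverted, so I would replace its (nonexistent) inverse by the fundamental kernel $Z(\theta_0)=(I-P(\theta_0)+\Pi(\theta_0))^{-1}$ and run the same bounded-multiplication estimates. First I would record the ergodic preliminaries. Conditions (a) and (b) are exactly the geometric-drift-plus-minorization pair of Chapter~15 of \cite{meyn09}, so at $\theta_0$ they give positive Harris recurrence, a unique stationary $\pi(\theta_0)$, and geometric ergodicity in $w$-norm, i.e.\ $\vertiii{P^n(\theta_0)-\Pi(\theta_0)}_w\le D\rho^n$ for some $D<\infty$, $\rho\in(0,1)$. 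Because $\Pi(\theta_0)$ is a rank-one projection with $P(\theta_0)\Pi(\theta_0)=\Pi(\theta_0)\Pi(\theta_0)=\Pi(\theta_0)$, one checks $(P(\theta_0)-\Pi(\theta_0))^n=P^n(\theta_0)-\Pi(\theta_0)$ for $n\ge1$, so $Z(\theta_0)=\sum_{n\ge0}(P(\theta_0)-\Pi(\theta_0))^n$ converges absolutely in $\mathcal L_w$ and equals $(I-P(\theta_0)+\Pi(\theta_0))^{-1}$; this gives invertibility. For $\theta$ near $\theta_0$, condition (c) and $\mathbf 1\in L_w$ (with $P(\theta)\mathbf 1=\mathbf 1$) give $\vertiii{P(\theta)-P(\theta_0)}_w\to0$, which turns (a) into a drift inequality with coefficient $r+o(1)<1$ on a neighborhood $\mathcal N$; with the uniform minorization (b) this yields positive recurrence and, integrating the drift against $\pi(\theta)$, the uniform bound $\sup_{\theta\in\mathcal N}\|\pi(\theta)\|_w<\infty$.

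The crux is an exact finite-difference identity. Subtracting the invariance equations $\pi(\theta)(I-P(\theta))=0$ and $\pi(\theta_0)(I-P(\theta_0))=0$ gives $[\pi(\theta)-\pi(\theta_0)](I-P(\theta_0))=\pi(\theta)(P(\theta)-P(\theta_0))$. Right-multiplying by $Z(\theta_0)$, using $\Pi(\theta_0)Z(\theta_0)=\Pi(\theta_0)$ (hence $(I-P(\theta_0))Z(\theta_0)=I-\Pi(\theta_0)$), together with $[\pi(\theta)-\pi(\theta_0)]\Pi(\theta_0)=(\pi(\theta)\mathbf 1-\pi(\theta_0)\mathbf 1)\pi(\theta_0)=0$ by normalization, yields
\begin{equation*}
\pi(\theta)-\pi(\theta_0)=\pi(\theta)\,(P(\theta)-P(\theta_0))\,Z(\theta_0).
\end{equation*}
This is the analogue of (\ref{eq2:2.9}). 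Bounding the right-hand side by $\|\pi(\theta)\|_w\vertiii{P(\theta)-P(\theta_0)}_w\vertiii{Z(\theta_0)}_w$ gives continuity of $\pi(\cdot)$ in $\mathcal M_w$; then, dividing by $h=\theta-\theta_0$ and splitting $\pi(\theta)\tfrac{P(\theta)-P(\theta_0)}{h}=\pi(\theta)\big(\tfrac{P(\theta)-P(\theta_0)}{h}-P'(\theta_0)\big)+(\pi(\theta)-\pi(\theta_0))P'(\theta_0)+\pi(\theta_0)P'(\theta_0)$, letting $h\to0$ (via the just-proved continuity, the uniform moment bound, and (c)) yields differentiability with $\pi'(\theta_0)=\pi(\theta_0)P'(\theta_0)Z(\theta_0)$, which is (\ref{eq2:2.13}).

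For the continuously-differentiable and higher-order claims I would note the entire argument applies at every $\theta\in\mathcal N$, giving $\pi'(\theta)=\pi(\theta)P'(\theta)Z(\theta)$ on $\mathcal N$; continuity of $Z(\cdot)$ follows from the resolvent identity $Z(\theta)-Z(\theta_0)=Z(\theta)[(P(\theta)-P(\theta_0))-(\Pi(\theta)-\Pi(\theta_0))]Z(\theta_0)$ and the continuity of $\pi(\cdot)$, so $\pi'(\cdot)$ is continuous. An induction on $n$ then bootstraps regularity: if $\pi(\cdot)$ is $(n-1)$-times differentiable on $\mathcal N$, so are $\Pi(\cdot)$ and hence $Z(\cdot)$ (iterating Theorem~\ref{thm:01}(ii)), whence $\pi'=\pi P'Z$ is $(n-1)$-times differentiable, i.e.\ $\pi$ is $n$-times differentiable. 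Finally, differentiating the exact identity $n$ times at $\theta_0$ by Leibniz—where the factor $(P(\theta)-P(\theta_0))$ and only its zeroth derivative vanish at $\theta_0$—collapses the sum to the stated recursion $\pi^{(n)}(\theta_0)=\sum_{j=0}^{n-1}\binom{n}{j}\pi^{(j)}(\theta_0)P^{(n-j)}(\theta_0)Z(\theta_0)$.

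I expect the main obstacle to be the ergodic preliminaries of the first step rather than the operator algebra: one must verify that the drift–minorization pair survives the perturbation from $P(\theta_0)$ to $P(\theta)$ well enough to force a \emph{uniform} $w$-moment bound on $\pi(\theta)$ over $\mathcal N$, and that geometric ergodicity at $\theta_0$ is strong enough in the $w$-norm for the series defining $Z(\theta_0)$ to converge in $\mathcal L_w$. Once the exact finite-difference identity is established, differentiability and the derivative formula follow from the same bounded-multiplication estimates as in Theorem~\ref{thm:01}.
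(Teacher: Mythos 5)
Your overall architecture coincides with the paper's: you derive the exact finite-difference identity $\pi(\theta)-\pi(\theta_0)=\pi(\theta)\,(P(\theta)-P(\theta_0))\,Z(\theta_0)$ (the paper's (\ref{eq2:2.15}), obtained there by adding the vanishing term $(\pi(\theta)-\pi(\theta_0))\Pi(\theta_0)$ before inverting rather than by right-multiplying by $Z(\theta_0)$ and using $\Pi(\theta_0)Z(\theta_0)=\Pi(\theta_0)$, but the algebra is identical), you get the uniform bound $\sup_{\theta\in\mathcal N}\|\pi(\theta)\|_w<\infty$ from the perturbed drift inequality exactly as the paper does via \cite{glynn08}, and continuity, differentiability, and the Leibniz recursion for $\pi^{(n)}(\theta_0)$ then follow the same way. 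The one place you genuinely depart from the paper is the construction of the fundamental kernel $Z(\theta_0)$, and that is where there is a real gap, not merely a detail to check.

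You assert that (a) and (b) give $w$-geometric ergodicity, $\vertiii{P^n(\theta_0)-\Pi(\theta_0)}_w\le D\rho^n$, and you build $Z(\theta_0)$ as the Neumann series $\sum_{n\ge0}(P(\theta_0)-\Pi(\theta_0))^n$. But drift plus minorization do \emph{not} imply aperiodicity, and the geometric ergodic theorem of Chapter 15/16 of \cite{meyn09} requires it. Concretely, take $S=\{1,2\}$ with $P(\theta,1,\{2\})=P(\theta,2,\{1\})=1$ for all $\theta$, $A=\{1\}$, $m=2$, $\varphi=\delta_1$, $\lambda=1$, $w(1)=1$, $w(2)=2/r$: conditions (a), (b), (c) all hold (with $P'(\theta_0)=0$), yet $P^n$ alternates between $I$ and $P$, so $(P-\Pi)^n=P^n-\Pi$ does not tend to zero and your series diverges, even though $I-P+\Pi$ is perfectly invertible. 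Thus your proof of the invertibility assertion---which is one of the stated conclusions of the theorem---fails on chains satisfying every hypothesis. The paper avoids this by appealing to Theorem 2.3 of \cite{glynn96}, which gives solvability of Poisson's equation with $w$-bounds (equivalently, invertibility of $I-P(\theta_0)+\Pi(\theta_0)$ on $\mathcal L_w$) for positive Harris chains under the drift condition alone, with no aperiodicity. Everything downstream in your argument uses only the existence and boundedness of $Z(\theta_0)$ (and, for your continuity and higher-order steps, the openness of invertibility in the Banach algebra $\mathcal L_w$ then supplies uniformly bounded $Z(\theta)$ near $\theta_0$, so you do not need geometric ergodicity at each $\theta$ either), so the gap is repaired by exactly this substitution. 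You flagged the convergence of the series as a point to verify; it is in fact false as stated. To be fair, the paper's own addendum on continuity of $\pi'$ claims that (a) and (b) imply $\vertiii{(P(\theta_0)-\Pi(\theta_0))^m}_w<1$ for some $m$, which is open to the same periodicity objection; but the paper's proof of differentiability and of (\ref{eq2:2.13}) itself is not.
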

\begin{remark}
 Note that Theorem 4 of \cite{glynn95} is closely related to the above theorem. See also Remark 11 and the Kendall set assumption in \cite{glynn95}. \cite{heidergott06} also imposes similar assumption to establish the measure-valued derivative of the stationary distribution. 
\end{remark}
\begin{proof}{Proof.}
In view of (a) and (c), there exists $r'<1$ such that 
	\begin{equation}\label{eq2:2.14}
	(P(\theta_0+h) w)(x) \leq r' w(x) + cI(x\in A)
	\end{equation}
for $x\in S$ and $|h|$ sufficiently small. Assumptions (a) and (b), and the fact that $w\geq 1$ implies that $X$ is positive Harris recurrent for $\theta$ in a neighborhood of $\theta_0$. We can now appeal to Theorem 2.3 of \cite{glynn96} to establish that $(I-P(\theta_0) + \Pi(\theta_0))$ is invertible on $\mathcal L_w$, with $(I-P(\theta_0) + \Pi(\theta_0))^{-1} \in \mathcal L_w$. 

Furthermore, according to \cite{glynn08}, (\ref{eq2:2.14}) implies that $\pi(\theta_0+h) w \leq c/(1-r')$, and hence $\|\pi(\theta_0+h)\|_w \leq c/(1-r').$ Also,
	\begin{align*}
	(\pi(\theta_0+h) - \pi(\theta_0)) (I-P(\theta_0)) 
	&= \pi(\theta_0+h) (I-P(\theta_0))\\
	&= \pi(\theta_0+h) (P(\theta_0+h) - P(\theta_0)).
	\end{align*}
In addition, $\nu \Pi(\theta_0) = \pi(\theta_0) $ for any probability $\nu$ on $S$. So $(\pi(\theta_0+h) - \pi(\theta_0))\Pi(\theta_0) = 0$. Consequently, 
	\begin{equation*} 
	(\pi(\theta_0 + h) - \pi(\theta_0)) (I-P(\theta_0)+\Pi(\theta_0)) 
	= \pi(\theta_0+h) (P(\theta_0+h)-P(\theta_0)),
	\end{equation*}
from which it follows that
	\begin{equation}\label{eq2:2.15}
	\pi(\theta_0 + h) - \pi(\theta_0) 
	= \pi(\theta_0+h) (P(\theta_0+h)-P(\theta_0))(I-P(\theta_0)+\Pi(\theta_0))^{-1}.
	\end{equation}
Thus,
	\begin{equation}\label{continuity_of_pi}
	\|\pi(\theta_0+h) - \pi(\theta_0)\|_w 
	\leq \frac{c}{1-r'}\vertiii{P(\theta_0+h) - P(\theta_0)}_w\cdot \vertiii{(I-P(\theta_0)+\Pi(\theta_0))^{-1}}_w
	\end{equation}
Since $P(\cdot)$ is differentiable in $\mathcal L_w$, $\vertiii{P(\theta_0+h)-P(\theta_0)}_w \to 0$ as $h \to 0$, so $\pi(\theta_0+h) \to \pi(\theta_0)$ in $\mathcal M_w$ as $h\to 0$. Letting $h\to 0$ in $(\ref{eq2:2.15})$ then yields (\ref{eq2:2.13}).

\what{
For the continuity of the derivative in case $P(\cdot)$ is continuously differentiable, note first that (a) and (b)
imply that $\vertiii{(P(\theta_0)-\Pi(\theta_0))^m}_w<1$ for some $m\geq 1$; this along with the continuity of $P(\cdot)$ and $\pi(\cdot)$, in turn, implies that $\sup_{|h|\leq h_0}\vertiii{(P(\theta_0+h)-\Pi(\theta_0+h))^m}_w<1$ for a small enough $h_0$. Therefore, we conclude that $\vertiii{(I-P(\theta_0+h) + \Pi(\theta_0+h))^{-1}}_w$ is bounded (uniformly w.r.t.\ $h$). 
From this, it is easy to see that the same argument as for (\ref{eq2:2.13}) works with $\theta = \theta_0+h$ instead of $\theta_0$ and proves that
\begin{equation}\label{pi_prime_general}
\pi'(\theta_0+h) = \pi(\theta_0+h) P'(\theta_0+h) (I-P(\theta_0+h)+\Pi(\theta_0+h))^{-1}.	
\end{equation}
Now, 
\begin{align*}
\pi'(\theta_0+h) - \pi'(\theta_0)
&= 
\left(\pi'(\theta_0+h) - \frac{\pi(\theta_0)-\pi(\theta_0+h)}{-h}\right) - \left(\pi'(\theta_0)- \frac{\pi(\theta_0+h)-\pi(\theta_0)}{h} \right)
= \text{(I)} - \text{(II)}
\end{align*}
where we have already seen that (II) converges to 0.
To show that (I) also vanishes, note that
\begin{align*}
(\pi(\theta_0) - \pi(\theta_0+h))(I-P(\theta_0+h)+\Pi(\theta_0+h))  
&
=(\pi(\theta_0) - \pi(\theta_0+h))(I-P(\theta_0+h)) 
\\
&
= \pi(\theta_0)(I-P(\theta_0+h))
= \pi(\theta_0)(P(\theta_0)-P(\theta_0+h)),
\end{align*}
and hence,
\begin{equation}\label{pi_finite_difference}
\frac{\pi(\theta_0) - \pi(\theta_0+h)}{-h}
= 
\pi(\theta_0)\frac{P(\theta_0+h)-P(\theta_0)}{h}(I-P(\theta_0+h)+\Pi(\theta_0+h))^{-1}.
\end{equation}
From (\ref{pi_prime_general}), (\ref{pi_finite_difference}), the continuity of $\pi(\cdot)$, the continuous differentiability of $P(\cdot)$, and the uniform boundedness of the norm of $(I-P(\theta_0+h)+\Pi(\theta_0+h))^{-1}$, we conclude that (I) vanishes. Therefore, $\pi'(\cdot)$ is continuous at $\theta_0$.
}

Finally, as in Proposition 2, the proof for the $n$-fold derivatives for $n\geq 2$ follows similar lines, and is therefore omitted. 
\end{proof}

This result establishes, in the presence of a single Lyapunov function $w$, the $n$-fold differentiability of the stationary distribution $\pi(\cdot)$ in $\mathcal M_w$. Of course, the simplicity of the result comes at the cost of assuming geometric ergodicity of $X$.

%
%
\section{Lyapunov Criteria for Differentiability of Random Horizon Expectations}\label{sec:random_horizon}

Let $\Lambda = (a,b)$ be an open interval containing $\theta_0$. For each $\theta \in \Lambda$, let $\E_x^\theta (\cdot) \triangleq \E^\theta(\cdot|X_0 = x)$ be the expectation operator associated with $X$, when $X$ is driven by the one-step transition kernel $P(\theta)$. 
As in Section~\ref{sec:operator}, we consider 
\begin{align}\label{def:u_star}
u^*(\theta, x) = \E_x^\theta \sum_{j=0}^{T-1} \exp\left(\sum_{k=0}^{j-1} g(X_k)\right)f(X_j) + \exp\left(\sum_{k=0}^{T-1} g(X_k)\right)f(X_T)
\end{align}
for each $x\in C$ 
given $f:S\to \R$, $g:S\to \R$, $\phi \neq C\subseteq S$, and $T=\inf \{n\geq 0: X_n \in C^c\}$. 
Our goal, in this section, is to provide Lyapunov conditions under which $u^*(\theta) = (u^*(\theta,x): x \in C)$ is differentiable at $\theta_0$, and to provide an expression for the derivative ${u^*}'(\theta)$.


Note that if $f$ is non-negative, then $u^*(\theta)$ is always well-defined. Furthermore, by conditioning on $X_1$, it is easily seen that 
\begin{align*}
u^*(\theta, x) = f(x) + \int_{C^c} \exp(g(x)) P(\theta,x,dy)f(y) + \int_C \exp(g(x))P(\theta, x,dy) u^*(\theta,y)
\end{align*}
for $x\in C$, and hence
\begin{equation}\label{eq:u_star}
u^*(\theta) = \tilde f(\theta) + K(\theta)u^*(\theta),
\end{equation}
where as in Section~\ref{sec:operator}
\begin{equation*}
\tilde f(\theta,x) = f(x) + \int_{C^c} \exp(g(x)) P(\theta,x,dy)f(y)
\end{equation*}
for $x\in C$, and $K(\theta) = (K(\theta, x, dy): x, y \in C)$ is the non-negative kernel on $C$ for which 
\begin{equation*}
K(\theta, x, dy ) = \exp(g(x))P(\theta,x,dy).
\end{equation*}






Given (\ref{eq:u_star}), formal differentiation of both sides of the equation yields
\begin{equation}
{u^*}'(\theta_0) = \tilde f'(\theta_0) + K'(\theta_0){u^*}(\theta_0) + K(\theta_0) {u^*}'(\theta_0),
\end{equation}
so that ${u^*}'(\theta)$ should satisfy the linear system
\begin{equation}\label{eq:2.3a}
(I-K(\theta_0)){u^*}'(\theta_0) = \tilde f'(\theta_0) + K'(\theta_0)u^*(\theta_0).
\end{equation}
When $|C|$ is finite, it will frequently be the case that the matrix $K(\theta_0)$ has spectral radius less than $1$, in which case $I-K(\theta_0)$ is invertible and 
\begin{equation}\label{eq:potential}
(I-K(\theta_0))^{-1} = \sum_{n=0}^\infty K^n(\theta_0)
\end{equation}
In this case, 
\begin{equation*}
{u^*}'(\theta_0) = \sum_{n=0}^\infty K^n(\theta_0)\left(\tilde f'(\theta_0) + K'(\theta_0) u^*(\theta_0)\right).
\end{equation*}
But (\ref{eq:u_star}) and (\ref{eq:potential}) further imply that
\begin{equation}\label{rep:u_star}
u^*(\theta_0) = \sum_{n=0}^\infty K^n(\theta_0)\tilde f(\theta_0), 
\end{equation}
and hence we arrive at the formula
\begin{equation}\label{eq:2.5}
{u^*}'(\theta_0) = \sum_{m=0}^\infty\sum_{n=0}^\infty K^m(\theta_0) K'(\theta_0) K^n(\theta_0) \tilde f(\theta_0) + \sum_{m=0}^\infty K^m(\theta_0) \tilde f'(\theta_0).
\end{equation}
The remainder of this section is largely concerned with rigorously extending the formula (\ref{eq:2.5}) to the general state space setting, under Lyapunov criteria that are close to minimal (and easily checkable from the model building blocks). We start by observing that when $f$ is non-negative, Fubini's theorem implies that 
\begin{align}
u^*(\theta, x) 
&= \sum_{j=0}^\infty \E_x^\theta \exp\left(\sum_{k=0}^{j-1} g(X_k)\right)f(X_j)I(T>j)\nonumber\\
&\qquad + \sum_{j=0}^\infty \E_x^{\theta} \exp\left( \sum_{k=0}^{j-1} g(X_k)\right) I(T\geq j) \hcancel{e^{g(X_j)}}f(X_j) \I(X_j \in C^c)\nonumber\\
&= \sum_{j=0}^\infty (K^{j}(\theta)\tilde f(\theta))(x),\label{eq:2.5a}
\end{align}
thereby rigorously verifying (\ref{rep:u_star}). To simplify the notation in the remainder of this paper, we set $K = K(\theta_0)$ and put 
\begin{equation}\label{eq:2.6}
G = \sum_{n=0}^\infty K^n.
\end{equation}

Our path to providing rigorous conditions under which (\ref{eq:2.5}) holds involves the following key ``absolute continuity'' assumption:
\begin{itemize}
	\item[A1.] 
	The kernels $(K(\theta): \theta \in \Lambda)$ are absolutely continuous with respect to $K$, in the sense that there exists a (measurable) density $(k(\theta, x,y): x,y\in C)$ such that 
\begin{equation*}
K(\theta, x,dy) = k(\theta, x, y) K(x,dy)
\end{equation*}
for $\theta\in \Lambda$, $x,y\in C$.
\end{itemize}
Our absolute continuity condition is often a mild hypothesis. For example, when $X$ has a transition density with respect to a reference measure $\eta$, A1 is in force when the support of the density is independent of $\theta$.

We also need to assume that $K(\theta)$ is suitably differentiable at $\theta_0$.
\begin{itemize}
	\item[A2.] 
	There exists $\epsilon>0$ such that for each $x,y\in C$, $k(\cdot, x,y)$ is continuously differentiable, with derivative $k'(\cdot, x,y)$, in $[\theta_0-\epsilon, \theta_0+\epsilon]$.
\end{itemize}

Set $\omega_\epsilon(x,y) = \sup\{|k'(\theta,x,y)|: |\theta-\theta_0|<\epsilon\}$, $k'(x,y) = k'(\theta_0, x,y)$, and $K'(x,dy) = k'(x,y) \allowbreak  K(x,dy)$. (Note that $K'$ is a signed kernel, and not non-negative.)

Our hypotheses are stated in terms of $K(\theta)$, not $P(\theta)$, in order to offer the extra generality needed to cover settings in which derivatives involving parameters in the discount factor $\exp(g(\cdot))$ are of interest. Such derivatives are commonly considered in the finance literature when attempting to hedge uncertainty in the so-called ``short rate.'' (The resulting derivative is called \emph{rho} in the finance context.)

Finally, we also need to assume $\tilde f(\theta)$ is suitably differentiable at $\theta_0$. To permit derivatives in parameters that involve the discount factor, we write $\tilde f(\theta)$ in the form
\begin{equation}
\tilde f(\theta, x) = f(x) + \int_{C^c} K(\theta, x, dy) f(y).
\end{equation}
\begin{itemize}
	\item[A3.] 
	The family of measures $(K(\theta, x, dy): \theta\in \Lambda, x \in C, y \in C^c)$ is absolutely continuous with respect to $(K(\theta_0, x, dy): x\in C, y \in C^c)$, in the sense that there exists a (measurable) density $(k(\theta, x,y): x\in C, y \in C^c)$ such that 
	\begin{equation*}
	K(\theta, x, dy) = k(\theta, x, y) K(\theta_0, x, dy)
	\end{equation*}
	for $\theta \in \Lambda$, $x \in C$, $y\in C^c$. Furthermore, there exists $\epsilon>0$ such that for $x \in C$, $y\in C^c$, $k(\cdot, x,y)$ is continuously differentiable, with derivative $k'(x,y),$ in $[\theta_0-\epsilon, \theta_0 +\epsilon]$. Also, we assume that 
	\begin{equation*}
	\tilde r_\epsilon(x) \triangleq \int_{C^c} \tilde \omega_\epsilon(x,y)|f(y)|K(\theta_0,x,dy)<\infty
	\end{equation*}
	for $x \in C$, where
	\begin{equation*}
	\tilde \omega_\epsilon(x,y) = \sup_{|\theta - \theta_0 | < \epsilon} |k'(\theta, x,y)|.
	\end{equation*}
\end{itemize}
In many applications, $\tilde f(\theta)$ is independent of $\theta$ and A3 need not be verified (e.g. expected hitting times).

For $x\in C$, $y\in C^c$, set $K(x,dy)= K(\theta_0, x, dy)$ and $K'(x,dy)= k'(\theta_0, x,y) K(x,dy)$. We are now ready to state the main theorem of this section.

\begin{theorem}\label{thm:1}
Assume A1, A2, and A3. Suppose there exists $\epsilon > 0$ and two finite-valued non-negative functions $v_0$ and $v_1$ defined on $C$ for which
\begin{equation}\label{eq:2.7}
(K(\theta)v_0)(x) \leq v_0(x) - |\tilde f(\theta, x)|
\end{equation}
for $x\in C$ and $|\theta - \theta_0| < \epsilon$, and 
\begin{equation}\label{eq:2.8}
(Kv_1)(x) \leq v_1(x) - \int_C \omega_\epsilon(x,y) v_0(y) K(x,dy) - \tilde r_\epsilon(x)
\end{equation}
for $x\in C$. Then, $u^*(\cdot,x)$ is differentiable at $\theta_0$ and 
\begin{equation}\label{eq:2.9}
{u^*}'(\theta_0) = \int_C\int_C\int_C G(x,dy) K'(y,dz) G(z, dw) f(w) + \int_C \int_{C^c} G(x, dy) K'(y,dz) f(z).
\end{equation}
\what{%
If, in addition, 
\begin{equation}\label{cond:random-horizon-C1-2}
\int_C \omega_\epsilon(x,y)v_1(y)K(x,dy)<\infty
\end{equation}
and (\ref{eq:2.8}) holds in a neighborhood of $\theta_0$, i.e., for $\theta\in[\theta_0-\epsilon, \theta_0+\epsilon]$
\begin{equation}\label{cond:random-horizon-C1-3}\tag{\ref*{eq:2.8}$'$}
(K(\theta)v_1)(x) \leq v_1(x) - \int_C \omega_\epsilon(x,y) v_0(y) K(\theta,x,dy) - \int_{C^c} \tilde \omega_\epsilon(x,y)|f(y)|K(\theta,x,dy)
\end{equation}
then ${u^*}'(\cdot,x)$ is continuous on $[\theta_0-\epsilon, \theta_0+\epsilon]$. 
}
\end{theorem}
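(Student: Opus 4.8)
The plan is to leverage the differentiability already granted at $\theta_0$ by the first part of the theorem, and to bootstrap it into continuity of the derivative across the whole interval. Writing $G_\theta := \sum_{n\ge 0} K^n(\theta)$ and $\psi(\theta) := \tilde f'(\theta) + K'(\theta) u^*(\theta)$, formal differentiation of the fixed-point equation (\ref{eq:u_star}) (compare (\ref{eq:2.3a})) suggests the representation ${u^*}'(\theta) = G_\theta \psi(\theta)$, and my first step is to make this rigorous not only at $\theta_0$ but at every interior base point $\theta_1\in[\theta_0-\epsilon,\theta_0+\epsilon]$. This is precisely what the strengthened drift inequality (\ref{cond:random-horizon-C1-3}) buys: for any such $\theta_1$ the suprema $\omega_\epsilon,\tilde\omega_\epsilon$ (taken over the $\theta_0$-neighborhood) dominate the corresponding suprema over a small neighborhood of $\theta_1$, so the Lyapunov pair $(v_0,v_1)$ satisfies (\ref{eq:2.7}) and the analogue of (\ref{eq:2.8}) with $\theta_0$ replaced by $\theta_1$. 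Hence I may re-invoke the first part of the theorem verbatim at $\theta_1$, obtaining both differentiability there and the representation ${u^*}'(\theta_1)=G_{\theta_1}\psi(\theta_1)$. Continuity of ${u^*}'(\cdot,x)$ on the interval then reduces to continuity, at a single arbitrary point (say $\theta_0$, the others being identical), of the map $\theta\mapsto (G_\theta\psi(\theta))(x)$.

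Next I would record two integrand-level continuity facts, both consequences of A2, A3 and dominated convergence with dominating functions furnished by the drift inequalities: that $\theta\mapsto u^*(\theta)(x)$ is continuous, and that $\theta\mapsto\psi(\theta)(x)$ is continuous, the latter because $k'(\theta,x,y)$ is continuous in $\theta$ and $|\psi(\theta)|$ is dominated by $\rho_\theta(x):=\int_C\omega_\epsilon(x,y)v_0(y)K(\theta,x,dy)+\int_{C^c}\tilde\omega_\epsilon(x,y)|f(y)|K(\theta,x,dy)$, which is integrable by (\ref{eq:2.7})--(\ref{eq:2.8}) and A3. Expanding $(G_\theta\psi(\theta))(x)=\sum_{n\ge 0}(K^n(\theta)\psi(\theta))(x)$, each summand is a finite iterated integral and so is continuous in $\theta$ by repeated dominated convergence, while the pointwise domination $|(K^n(\theta)\psi(\theta))(x)|\le (K^n(\theta)\rho_\theta)(x)$ holds with $\sum_{n\ge 0}(K^n(\theta)\rho_\theta)(x)=(G_\theta\rho_\theta)(x)\le v_1(x)$ by (\ref{cond:random-horizon-C1-3}). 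By a generalized dominated convergence (Pratt-type) argument for series, the limit $\theta\to\theta_0$ may be passed through the sum once the dominating sums converge, i.e.\ once $\theta\mapsto(G_\theta\rho_\theta)(x)$ is continuous; equivalently, via the telescoping bound $\sum_{n\ge N}(K^n(\theta)\rho_\theta)(x)=(K^N(\theta)(G_\theta\rho_\theta))(x)\le (K^N(\theta)v_1)(x)$, once the tails $(K^N(\theta)v_1)(x)$ are uniformly small over the compact interval.

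I expect this uniform tail control to be the crux of the argument and the only place where the absence of a spectral (geometric-ergodicity) bound truly bites: lacking an operator-norm contraction on $G_\theta$, the infinite potential series must be tamed purely through the Lyapunov functions. The tool I would use is a monotone/Dini argument on the compact interval $[\theta_0-\epsilon,\theta_0+\epsilon]$: for each $N$ the map $\theta\mapsto(K^N(\theta)v_1)(x)$ is continuous, the partial sums $\sum_{n<N}(K^n(\theta)\rho_\theta)(x)$ increase in $N$ to $(G_\theta\rho_\theta)(x)$, and $(K^N(\theta)v_1)(x)$ is non-increasing in $N$ because $K(\theta)v_1\le v_1$ by (\ref{cond:random-horizon-C1-3}); Dini's theorem then upgrades pointwise tail decay to uniform tail decay, \emph{provided} the limiting cost potential $(G_\theta\rho_\theta)(x)$ is itself continuous in $\theta$. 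Establishing that last point is exactly where condition (\ref{cond:random-horizon-C1-2}) enters: since both ${u^*}'$ and $G_\theta\rho_\theta$ live on the $v_1$-scale, I need the increment estimate $|((K(\theta)-K(\theta_0))h)(x)|\le |\theta-\theta_0|\int_C\omega_\epsilon(x,y)|h(y)|K(x,dy)$ for functions $h$ with $|h|\le v_1$, and the right-hand side is finite precisely because $\int_C\omega_\epsilon(x,y)v_1(y)K(x,dy)<\infty$. This is the integrability that lets me compare the varying kernel $K(\theta)$ with the base kernel $K(\theta_0)$ at the $v_1$-scale and close the continuity argument for the cost potential, cutting off what would otherwise be an infinite regress through successive Lyapunov scales. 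Assembling the uniform tail bound with the termwise continuity and the two integrand-level facts yields $(G_\theta\psi(\theta))(x)\to(G_{\theta_0}\psi(\theta_0))(x)$, i.e.\ continuity of ${u^*}'(\cdot,x)$ at $\theta_0$; running the same argument at an arbitrary interior base point gives continuity on all of $[\theta_0-\epsilon,\theta_0+\epsilon]$, and I would treat the endpoints via one-sided limits.
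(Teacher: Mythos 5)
Your proposal never proves the main assertion of the theorem: differentiability of $u^*(\cdot,x)$ at $\theta_0$ and the representation (\ref{eq:2.9}) are taken as ``already granted by the first part of the theorem,'' and everything you write concerns only the continuity rider. But that first part is part of the statement, and it is where the paper does its real work: Proposition~\ref{prop:1} applied to (\ref{eq:2.7}) gives $\sum_n K^n(\theta)|\tilde f(\theta)|\le v_0$, hence $u^*(\theta)$ is finite, $|u^*(\theta)|\le v_0$, and $u^*(\theta)=\sum_n K^n(\theta)\tilde f(\theta)$; then the identity $(I-K)\bigl(u^*(\theta_0+h)-u^*(\theta_0)\bigr)=\bigl(K(\theta_0+h)-K(\theta_0)\bigr)u^*(\theta_0+h)+\bigl(\tilde f(\theta_0+h)-\tilde f(\theta_0)\bigr)$, the mean-value bounds $|k(\theta_0+h,x,y)-k(\theta_0,x,y)|\le |h|\,\omega_\epsilon(x,y)$, and Proposition~\ref{prop:1} applied to (\ref{eq:2.8}) give a $G$-integrable domination of the difference quotient, after which dominated convergence yields (\ref{eq:2.9}). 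As a proof of the stated theorem, your write-up is therefore incomplete from the outset.

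The continuity argument you do give has two concrete gaps. First, you cannot ``re-invoke the first part of the theorem verbatim at $\theta_1$'': the hypotheses A1--A3 and the penalty integrals in (\ref{eq:2.8}) are formulated relative to the base kernel $K(\theta_0)$, and at base point $\theta_1$ they would involve the density $k(\theta,x,y)/k(\theta_1,x,y)$, whose derivative supremum $\sup_\theta|k'(\theta,x,y)|/k(\theta_1,x,y)$ is \emph{not} dominated by $\omega_\epsilon(x,y)$ (and $K(\theta)\ll K(\theta_1)$ need not even hold); what is actually required is to rerun the differentiability proof at each $\theta$ keeping all densities relative to $K(\theta_0)$, which is what the paper does in deriving the fixed-point equation ${u^*}'(\theta)=\tilde f'(\theta)+K'(\theta)u^*(\theta)+K(\theta){u^*}'(\theta)$ with $K'(\theta)h(x)=\int_C k'(\theta,x,y)h(y)K(x,dy)$. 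Second, and decisively, the crux of your Dini route --- continuity of $\theta\mapsto (G_\theta\rho_\theta)(x)$ --- is not delivered by (\ref{cond:random-horizon-C1-2}): your increment estimate bounds $(G_\theta-G_{\theta_0})\rho_{\theta_0}$ by $|\theta-\theta_0|\,G_\theta\zeta$ with $\zeta(x)=\int_C\omega_\epsilon(x,y)v_1(y)K(x,dy)$, and (\ref{cond:random-horizon-C1-2}) makes $\zeta$ finite pointwise but says nothing about its potential $G_\theta\zeta$, so the ``infinite regress through Lyapunov scales'' is not in fact cut off --- you would need a third Lyapunov function. (Relatedly, your tail bound $(K^N(\theta)v_1)(x)$ need not tend to $0$: $K(\theta)v_1\le v_1$ only makes it non-increasing in $N$, and $v_1$ may carry a nontrivial harmonic part.) The paper's route avoids all of this: it writes ${u^*}'(\theta+h)-{u^*}'(\theta)=G(\theta)\bigl(\tilde f'(\theta+h)-\tilde f'(\theta)\bigr)+G(\theta)\bigl((K'(\theta+h)-K'(\theta))u^*(\theta+h)\bigr)+G(\theta)\bigl(K'(\theta)(u^*(\theta+h)-u^*(\theta))\bigr)+G(\theta)\bigl((K(\theta+h)-K(\theta)){u^*}'(\theta+h)\bigr)$ and applies dominated convergence term by term, with domination supplied directly by (\ref{eq:2.7}), (\ref{cond:random-horizon-C1-3}), the bound $|{u^*}'|\le v_1$, and (\ref{cond:random-horizon-C1-2}); no uniform-in-$\theta$ tail control and no Dini argument are needed.
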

Recalling the definition of $G$, we see that (\ref{eq:2.9}) is indeed the general state space analog of (\ref{eq:2.5}). The functions $v_0$ and $v_1$ appearing in Theorem~\ref{thm:1} are often called (stochastic) Lyapunov functions. A standard means of guessing good choices for $v_0$ and $v_1$ is to recognize that $u^*(\theta_0)$ satisfies (\ref{eq:2.7}) with equality, 
\rvin{if $\tilde f$ is non-negative}
while 
\begin{equation*}
\int_C \left[\int_C K(y, dz) \omega_\epsilon(y,z) v_0(z) + r_\epsilon(y)\right] G(x,dy)
\end{equation*}
satisfies (\ref{eq:2.8}) with equality. When $C\subseteq \R^m$ is unbounded, one can often approximate the large $x$ behavior of these functions, and use these approximations as choices for $v_1$ and $v_2$, respectively.

The proof of Theorem~\ref{thm:1} rests on the following easy bound. 
\begin{proposition}\label{prop:1}
	Suppose that $Q=(Q(x,dy): x,y\in C)$ is a non-negative kernel and that $f: C\to \R_+$. If $v:C\to \R_+$ is a finite-valued function for which 
	\begin{equation}\label{eq:2.10}
	Qv \leq v - f,
	\end{equation}
	then
	\begin{equation}\label{eq:2.11}
	\sum_{n=0}^\infty Q^n f \leq v.
	\end{equation}
\end{proposition}
\begin{proof}{Proof.}
Note that (\ref{eq:2.10}) implies that $Qv \leq v$, and hence $Q^nv \leq v$ for $n\geq 0$. It follows that $Q^nv$ is finite-valued for $n\geq 0$. Inequality (\ref{eq:2.10}) can be re-written as 
\begin{equation}\label{eq:2.12}
f \leq v - Qv.
\end{equation}
Applying $Q^j$ to both sides of (\ref{eq:2.12}), we get
\begin{equation}\label{eq:2.13}
Q^jf\leq Q^j v - Q^{j+1}v
\end{equation}
Summing both sides of (\ref{eq:2.13}) over $j = 0, 1, \ldots, n$, we find that 
\begin{equation*}
\sum_{j=0}^n Q^j f \leq v - Q^{n+1} v \leq v.
\end{equation*}
Sending $n \to \infty$ yields (\ref{eq:2.11}). 
\end{proof}

\begin{proof}{Proof of Theorem~\ref{thm:1}.}
For the purposes of this proof, $\epsilon$ is taken as the smallest of the $\epsilon$'s appearing in A2, A3, and the statement of the theorem. We start by observing that Proposition~\ref{prop:1}, applied to the Lyapunov bound (\ref{eq:2.7}), guarantees that
\begin{equation*}
\sum_{n=0}^\infty K^n(\theta) |\tilde f(\theta) | \leq v_0
\end{equation*}
and hence \rvout{(\ref{rep:u_star})} \rvin{Fubini's theorem} implies that $u^*(\theta)$ is finite-valued, $u^*(\theta) = \sum_{n=0}^\infty K^n(\theta)\tilde f(\theta)$, and $|u^*(\theta)|\leq v_0$. Since $u^*(\theta)$ is finite-valued (as is $K(\theta)u^*(\theta)$), we can write 
\begin{equation*}
u^*(\theta_0+h) - u^*(\theta_0) = K(\theta_0+h) u^*(\theta_0+h) - K(\theta_0) u^*(\theta_0) + \tilde f(\theta_0 + h) - \tilde f(\theta_0)
\end{equation*}
and hence
\begin{equation}\label{eq:2.14}
(I-K) \big(u^*(\theta_0+h)- u^*(\theta_0)\big) = \big(K(\theta_0 + h)- K(\theta_0) \big) u^*(\theta_0 + h) + \big(\tilde f(\theta_0 + h) - \tilde f(\theta_0)\big).
\end{equation}
For $|h| < \epsilon$,
\begin{align*}
&\left| \int_C (K(\theta_0 + h, x, dy) - K(\theta_0, x,dy))u^*(\theta_0+h,y) \right|\\
&\leq \int_C | k(\theta_0 + h, x, y) - k(\theta_0 , x , y)| K(x,dy) v_0(y)\\
&\leq |h| \int_C \sup_{|\theta - \theta_0| < \epsilon} |k'(\theta, x,y) | K(x,dy) v_0(y)\\
&= |h| \int_C \omega_\epsilon(x,y) K(x,dy) v_0(y).
\end{align*}
Similarly, for $|h|<\epsilon$,
\begin{align*}
&|\tilde f(\theta_0 + h, x) - \tilde f(\theta_0, x)|\\
&\leq |h| \int_{C^c} \tilde \omega_\epsilon (x,y)K(x,dy)|f(y)|\\
&\leq |h| \tilde r_\epsilon (x).
\end{align*}
Consequently, Proposition~\ref{prop:1}, together with the Lyapunov bound (\ref{eq:2.8}), ensures that 
\begin{equation*}
\int_C G(x,dy)\bigg( \left| \int_C (K(\theta_0 + h, y, dz) - K(\theta_0, y, dz) ) u^*(\theta_0+h, z) \right| +\left|\tilde f(\theta_0+h,y) - \tilde f(\theta_0, y)\right|\bigg) \leq |h|v_1(x).
\end{equation*}
It follows from (\ref{eq:2.14}) that $u^*(\theta,x)$ is continuous at $\theta_0$ and
\cf{
Is there a short justification? Referees may ask to justify.
}
\begin{align*}
\frac{u^*(\theta_0+h,x) - u^*(\theta_0,x)}{h} 
&= \int_C G(x,dy) \left[\int_C \frac{k(\theta_0+h,y,z) - k(\theta_0, y,z)}{h} u^*(\theta_0+h,z)K(y,dz) \right.\\
&\qquad\qquad\quad\qquad+ \left.\int_{C^c} \frac{k(\theta_0 + h, y,z) - k(\theta_0, y,z)}{h} f(z) K(y,dz)
\right].
\end{align*}
But 
\begin{equation}\label{eq:2.15}
\frac{k(\theta_0+h,y,z)-k(\theta_0, y, z)}{h} \to k'(y,z)
\end{equation}
and 
\begin{equation}\label{eq:2.16}
u^*(\theta_0+h,z) \to u^*(\theta_0, z)
\end{equation}
as $h\to 0$. Also,
\begin{equation}\label{eq:2.17}
\left|\frac{k(\theta_0+h,y,z) - k(\theta_0,y,z)}{h}u^*(\theta_0+h,z)\right| \leq \omega_\epsilon(y,z) v_0(z)
\end{equation}
for $y,z\in C$, and 
\begin{equation}\label{eq:2.18}
\frac{|k(\theta_0+h,y,z) - k(\theta_0,y,z)|}{h} \leq \tilde \omega_\epsilon(y,z)
\end{equation}
for $y\in C$, $z\in C^c$. The Lyapunov bound (\ref{eq:2.8}), together with Proposition~\ref{prop:1}, guarantees that
\begin{equation}\label{eq:2.19}
\int_C G(x,dy)\left(\int_C \omega_\epsilon(y,z) v_0(z) K(y,dz) + \int_{C^c} \tilde \omega_\epsilon (y,z) |f(z)| K(y,dz) \right) < \infty.
\end{equation} 
In view of (\ref{eq:2.15}) through (\ref{eq:2.19}), the Dominated Convergence Theorem therefore establishes that $u^*(\theta, x)$ is differentiable at $\theta_0$, and 
\begin{equation}\label{eq:sec3:u-star-prime-in-the-proof}
{u^*}'(\theta_0, x) = \int_C G(x,dy) \int_C k'(y,z) u^*(\theta_0, z) K(y,dz) + \int_C G(x,dy)\int_{C^c} k'(y,z)f(z) K(y,dz),
\end{equation}
which is equivalent to (\ref{eq:2.9}).

\what{%
Turning to the continuity of ${u^*}'(\cdot, x)$, note that one can easily check that 
$$
{u^*}'(\theta) = \tilde f'(\theta) + K'(\theta)u^*(\theta) + K(\theta){u^*}'(\theta)
$$
for $\theta\in[\theta_0-\epsilon, \theta_0+\epsilon]$ where $K'(\theta)u^*(\theta,x) = \int_C k'(\theta, x,y) u^*(\theta,y)K(x,dy)$, and hence,
\begin{align*}
{u^*}'(\theta+h) - {u^*}'(\theta) 
&= 
G(\theta)
\big(
\tilde f'(\theta+h) - \tilde f'(\theta) 
\big)
+
G(\theta)
\big(
(K'(\theta+h)-K'(\theta))u^*(\theta+h)
\big)\\
&\hspace{20pt}
+
G(\theta)
\big(
K'(\theta)(u^*(\theta+h)-u^*(\theta))
\big)
+
G(\theta)
\big(
(K(\theta+h)-K(\theta)){u^*}'(\theta+h)
\big).
\end{align*}
Now, a similar argument (via dominated convergence and the Lyapunov conditions) as the one that leads to (\ref{eq:sec3:u-star-prime-in-the-proof})---along with (\ref{cond:random-horizon-C1-2}), and (\ref{cond:random-horizon-C1-3})---shows that ${u^*}'(\theta+h) - {u^*}'(\theta) \to 0$ for $\theta\in[\theta_0-\epsilon, \theta_0+\epsilon]$.
}
\end{proof}

Our proof also yields the following (computable) bound on ${u^*}'(\theta_0)$, namely, 
\begin{equation}\label{eq:2.20}
|{u^*}'(\theta_0, x) | \leq v_1(x)
\end{equation}
for $x \in C$. 

In many applications, the parameter $\theta$ enters the dynamics in a very specific way, which allows further simplification of the result. 
In particular, whenever $S$ is a separable metric space, we can always express $X$ as the solution to a stochastic recursion; see, for example, 
\cite{kifer1986}. Namely, we can find a mapping $r:S\times S' \to S$ and a sequence $(Z_n:n\geq 1)$ of independent and identically distributed (iid) $S'$-valued random elements such that 
\begin{equation}\label{eqA}
X_{n+1} = r(X_n, Z_{n+1})
\end{equation}
for $n\geq 0$. Suppose that $\theta$ affects the dynamics of $X$ only through the distribution of the $Z_n$'s. Assume that for $z\in S'$,
\begin{equation}\label{eqAB}
P^\theta(Z_1 \in dz) = p(\theta, z) P^{\theta_0} (Z_1\in dz),
\end{equation}
where $p(\cdot,z)$ is continuously differentiable for $z\in S'$. If $u^*(\theta, x)$ is defined as in (\ref{def:u_star}), then $u^*(\cdot, x)$ is differentiable at $\theta_0$ and ${u^*}'(\theta_0,x)$ is given by (\ref{eq:2.9}) (where $K'(x,dy) = \E^{\theta_0}\I(r(x,Z_1)\in dy)p'(\theta_0, Z_1)$), provided that there exists $\epsilon>0$ and finite-valued non-negative function $v_0$ and $v_1$ defined on $C\subseteq S$ for which
\begin{equation}\label{eqC}
\E^{\theta_0} v_0(r(x, Z_1)) p(\theta, Z_1) \leq v_0(x) - |\tilde f(\theta, x)|
\end{equation}
for $x \in C$ and $|\theta - \theta_0| < \epsilon$, and 
\begin{align*}\label{eqD}
\E^{\theta_0} v_1 (r(x,Z_1)) 
\leq v_1 (x) &- \E ^{\theta_0}v_0(r(x,Z_1)) \sup_{|\theta - \theta_0|<\epsilon} |p'(\theta,Z_1)| \I(r(x,Z_1) \in C) \\
&- \E^{\theta_0} |f(r(x,Z_1)) |\sup_{|\theta - \theta_0|< \epsilon} |p'(\theta,Z_1)| \I(r(x,Z_1)\in C^c).
\end{align*}
for $x\in C$; the proof is essentially identical to that of Theorem~\ref{thm:1} and is omitted.

According to Theorem~\ref{thm:1}, for functions $f$ satisfying the Lyapunov bound, 
\begin{equation*}
{u^*}'(\theta_0, x) = \int_S \nu'(x,dy)f(y)
\end{equation*}
where
\begin{align*}
\nu'(w,dz) = \begin{cases}
\int_C G(w,dx)\int_C K'(x,dy) \int_C G(y,dz), & w,z\in C\\
\int_C G(w,dx)\int_{C^c}K'(x,dz), &  w\in C, z \in C^c.
\end{cases}
\end{align*}
Hence, our derivative can be represented in terms of a signed measure. (In general, $\nu'(x,S)$ is non-zero in this setting.)

The above approach also extends, in a straightforward way, to higher-order derivatives. Formal differentiation of (\ref{eq:u_star}) $n$ times yields the identity
\begin{equation*}
u^{*(n)}(\theta) = \tilde f^{(n)} (\theta) + \sum_{j=0}^n \binom{n}{j} K^{(n-j)}(\theta) u^{*(j)}(\theta),
\end{equation*}
which suggests that the $n$\textsuperscript{th} order derivative $u^{*(n)}(\theta)$ can then be recursively computed from $u^{*(0)}(\theta)$, \ldots, $u^{*(n-1)}(\theta)$ by solving the linear (integral) equation
\begin{equation}
(I-K(\theta))u^{*(n)} (\theta) 
= \rvin{\tilde f^{(n)}(\theta)} + \sum_{j=0}^{n-1}\binom{n}{j} K^{(n-j)} (\theta) u^{*(j)}(\theta).
\end{equation}
In particular, it should follow that 
\begin{equation}\label{eq:2.21}
u^{*(n)}(\theta) = G\left(\tilde f^{(n)}(\theta)+\sum_{j=0}^{n-1}\binom{n}{j} K^{(n-j)}(\theta) u^{*(j)}(\theta)\right).
\end{equation}

Rigorous verification of (\ref{eq:2.21}) can be implemented with a family $v_0, v_1, \ldots, v_n$ of Lyapunov functions. Specifically assume that the densities $k(\cdot, x, y)$ (for $x\in S, y \in S$) are $n$-times continuously differentiable in some neighborhood $[\theta_0-\epsilon, \theta_0 + \epsilon]$ of $\theta_0$, and set 
\begin{equation*}
\omega_\epsilon^{(j)}(x,y) = \sup_{|\theta-\theta_0|<\epsilon} |k^{(j)}(\theta, x, y)|
\end{equation*}
for $x,y\in C$ and 
\begin{equation*}
\tilde \omega_\epsilon^{(j)}(x,y) = \sup_{|\theta-\theta_0|<\epsilon} |k^{(j)}(\theta, x, y)|
\end{equation*}
for $x\in C$, $y\in C^c$.

\begin{theorem}\label{thm:2}
	Suppose that there exists $\epsilon > 0$ and a family of finite-valued non-negative functions $v_0, v_1, \ldots, v_n$ defined on $C$ for which 
	\begin{equation*}
	(K(\theta)v_0)(x) \leq v_0(x) - |\tilde f(\theta,x)|
	\end{equation*}
	for $x\in C$ and $|\theta - \theta_0| < \epsilon$;
	\begin{align*}
	(K(\theta)v_l)(x) \leq v_l(x) - \sum_{j=0}^{l-1} \binom{l}{j} \int_C \omega_\epsilon^{(l-j)} (x,y) v_j(y)K(\theta,x,dy) - \int_{C^c} \tilde \omega_\epsilon^{(l)}(x,y) |f(y)| K(\theta,x,dy)
	\end{align*}
	for $x\in C$, $|\theta - \theta_0| < \epsilon$, and $1\leq l\leq n$; \what{and
	$$
	\int_C \omega_\epsilon^{(n)}v_n(y)K(x,dy) < \infty
	$$
	for $x\in C.$
	}
	Then, $u^*(\cdot, x)$ is $n$-times continuously differentiable at $\theta_0$, and the derivative can be recursively computed from the equations 
	\begin{align*}
	u^{*(l)}(\theta_0,x) 
	&= \int_C G(x,dy) 
	\int_C\sum_{j=0}^{l-1}\binom{l}{j} k^{(l-j)} (\theta_0, x, y) u^{*(j)}(y)K(x,dy)\\
	&\qquad\quad+\int_C G(x,dy)\int_{C^c} k^{(l)}(y,z)f(z) K(y,dz)
	\end{align*}
\end{theorem}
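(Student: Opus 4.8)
The plan is to argue by induction on the differentiation order $l$, $0\le l\le n$, each step mimicking the proof of Theorem~\ref{thm:1}. The inductive hypothesis at stage $l$ records three things on the whole interval $(\theta_0-\epsilon,\theta_0+\epsilon)$: that $u^*(\cdot,x)$ is $l$-times continuously differentiable; that $u^{*(l)}(\theta)$ satisfies the $l$-fold differentiated form of (\ref{eq:u_star}), namely $u^{*(l)}(\theta)=R_l(\theta)+K(\theta)u^{*(l)}(\theta)$ with $R_l(\theta)=\tilde f^{(l)}(\theta)+\sum_{j=0}^{l-1}\binom{l}{j}K^{(l-j)}(\theta)u^{*(j)}(\theta)$; and that the a priori bound $|u^{*(j)}(\theta,x)|\le v_j(x)$ holds for $0\le j\le l$. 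The base case $l=0$ is exactly the opening of the proof of Theorem~\ref{thm:1}: applying Proposition~\ref{prop:1} to the first drift inequality gives $\sum_n K^n(\theta)|\tilde f(\theta)|\le v_0$, so $u^*(\theta)$ is finite-valued, equals $\sum_n K^n(\theta)\tilde f(\theta)$, and obeys $|u^*(\theta)|\le v_0$ throughout the neighborhood.

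For the inductive step, I would start from the recursion $u^{*(l-1)}(\theta)=R_{l-1}(\theta)+K(\theta)u^{*(l-1)}(\theta)$ supplied by the hypothesis, take its difference at $\theta_0+h$ and $\theta_0$, and isolate the $K(\theta_0)$-term precisely as in the derivation of (\ref{eq:2.14}). This yields the key identity
\begin{equation*}
(I-K)\big(u^{*(l-1)}(\theta_0+h)-u^{*(l-1)}(\theta_0)\big)=\big(R_{l-1}(\theta_0+h)-R_{l-1}(\theta_0)\big)+\big(K(\theta_0+h)-K(\theta_0)\big)u^{*(l-1)}(\theta_0+h),
\end{equation*}
with the \emph{fixed} operator $I-K=I-K(\theta_0)$ on the left, which is the crucial structural feature allowing inversion by $G=\sum_n K^n$.

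The analytic heart is then identical in spirit to Theorem~\ref{thm:1}. Using A2, the mean-value estimates (which raise the density index by one), the induction bounds $|u^{*(j)}(\theta_0+h)|\le v_j$, and the difference bound $|u^{*(j)}(\theta_0+h)-u^{*(j)}(\theta_0)|\le |h|\,v_{j+1}$, one checks that the right-hand side is bounded in absolute value by $|h|$ times a dominating integrand which, after collecting terms by Pascal's rule, is exactly $\sum_{j=0}^{l-1}\binom{l}{j}\omega_\epsilon^{(l-j)}(x,y)v_j(y)$ on $C$ together with $\tilde\omega_\epsilon^{(l)}(x,y)|f(y)|$ on $C^c$. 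This is precisely the integrand subtracted in the $l$-th drift inequality, so Proposition~\ref{prop:1} applied with $v_l$ gives $\int_C G(x,dy)[\cdots]\le |h|\,v_l(x)$; this first establishes continuity of $u^{*(l-1)}(\cdot,x)$ at $\theta_0$ and, after dividing by $h$, furnishes an $h$-uniform dominating function. The Dominated Convergence Theorem, combined with $k^{(l-j)}(\theta_0+h,\cdot)\to k^{(l-j)}(\theta_0,\cdot)$ and $u^{*(j)}(\theta_0+h)\to u^{*(j)}(\theta_0)$ from the hypothesis, lets me pass to the limit and obtain $u^{*(l)}(\theta_0)=G\big(R_{l-1}'(\theta_0)+K'(\theta_0)u^{*(l-1)}(\theta_0)\big)$; a Leibniz bookkeeping identity reassembles the bracket into $R_l(\theta_0)$, giving the stated recursion, while the same Proposition~\ref{prop:1} bound yields $|u^{*(l)}(\theta_0)|\le v_l$. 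Continuity of the top derivative $u^{*(n)}(\cdot,x)$ is obtained by rerunning the difference-quotient estimate at a general $\theta$ in the neighborhood (using that the drift inequalities are assumed on all of $(\theta_0-\epsilon,\theta_0+\epsilon)$) and invoking the extra finiteness hypothesis $\int_C\omega_\epsilon^{(n)}(x,y)v_n(y)K(x,dy)<\infty$, which plays the same role here that (\ref{cond:random-horizon-C1-2}) plays in Theorem~\ref{thm:1}.

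I expect the main obstacle to be propagating the uniform-in-$\theta$ bounds $|u^{*(j)}(\theta)|\le v_j$ off the base point, since the dominated-convergence step at stage $l$ requires these bounds at every $\theta_0+h$, not merely at $\theta_0$. The delicacy is one of reference measures: the derivative kernels $K^{(m)}(\theta)$ are naturally expressed against the base measure $K(\theta_0)$ (so $R_l(\theta)$ integrates against $K(\theta_0)$), whereas the drift inequalities are posed against $K(\theta)$. At $\theta_0$ the two coincide because $k(\theta_0,\cdot)\equiv 1$, so differentiability \emph{at} $\theta_0$ and the bound $|u^{*(l)}(\theta_0)|\le v_l$ follow cleanly; the care is needed in tracking this agreement through the neighborhood estimates that underlie the continuity of the derivatives.
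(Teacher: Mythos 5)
Your proposal is correct and is essentially the paper's own (omitted) argument: the paper states that the proof of this theorem ``mirrors that of Theorem~\ref{thm:1},'' and your induction on the derivative order---the fixed $(I-K(\theta_0))$ difference identity inverted by $G$, the mean-value estimates collected via Pascal's rule into exactly the integrand subtracted in the $l$-th drift inequality, Proposition~\ref{prop:1} for domination, and dominated convergence to pass to the limit---is precisely that mirroring, including the a priori bounds $|u^{*(l)}(\theta_0,\cdot)|\le v_l$ that the paper records after the theorem statement. The reference-measure subtlety you flag (drift inequalities posed against $K(\theta)$ versus mean-value bounds naturally expressed against $K(\theta_0)$ once one moves off the base point) is genuine, but it is equally present in, and glossed over by, the paper's own treatment of the corresponding continuity assertions in Theorem~\ref{thm:1}.
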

The proof of Theorem~\ref{thm:2} mirrors that of Theorem~\ref{thm:1}, and is therefore omitted. As in the proof of Theorem~\ref{thm:1}, the argument establishes the bound $|u^{*(n)}(\theta_0, x)| \leq v_n(x)$ for $x\in C$ on the $n$\textsuperscript{th} order derivative.

%
%
\section{Lyapunov Criteria for Differentiability of Stationary Expectations}\label{sec:equilibrium}

Perhaps the most commonly occurring expectations that arise in applications are those associated with steady-state behavior. Our Lyapunov approach is also well-suited to establishing differentiability in this context. As in Section~\ref{sec:random_horizon}, it is informative to first study the problem non-rigorously.

A stationary distribution $\pi(\theta) = (\pi(\theta, dx): x \in S)$ of the Markov chain $X$ associated with one-step transition kernel $P(\theta)$ will satisfy
\begin{equation}\label{eq:3.1a}
\pi(\theta) = \pi(\theta)P(\theta).
\end{equation}
Differentiating both sides of (\ref{eq:3.1a}) with respect to $\theta$, we obtain 
\begin{equation*}
\pi'(\theta) = \pi'(\theta)P(\theta) + \pi(\theta)P'(\theta),
\end{equation*}
which leads to the equation 
\begin{equation*}
\pi'(\theta) (I-P(\theta))=\pi(\theta)P'(\theta).
\end{equation*}
This equation is similar to (\ref{eq:2.3a}). However, unlike (\ref{eq:2.3a}), the operator $I-P(\theta)$ appearing here will never be invertible, even when $|S|<\infty$. In addition, $I - P(\theta)$ is acting on a measure rather than a function in this setting. Thus, a different approach is needed here. 

For a given function $f: S\to \R$, set $\alpha(\theta) = \pi(\theta) f$. Thus,
\begin{align}
\alpha(\theta_0 + h) - \alpha (\theta_0) 
&= \pi(\theta_0+h)f-\pi(\theta_0)f\nonumber\\
&= \pi(\theta_0+h)f_c,\label{eq:3.1}
\end{align}
where $f_c(x) = f(x) - \pi(\theta_0) f$. While $I-P(\theta_0)$ is singular, the \emph{Poisson's equation} 
\begin{equation}\label{eq:3.2}
(I-P(\theta_0))g = f_c
\end{equation}
is, under suitable technical conditions, generally solvable for $g$ (because of the special structure of the right-hand side, namely $\pi(\theta_0)f_c= 0$). Substituting (\ref{eq:3.2}) into (\ref{eq:3.1}), we get 
\begin{align}
\alpha(\theta_0 + h) - \alpha(\theta_0)
&= \pi(\theta_0 + h) (I-P(\theta_0))g\nonumber\\
&= \pi(\theta_0 + h) (P(\theta_0+h) - P(\theta_0))g.\label{eq:3.3a}
\end{align}
This suggests that
\begin{equation}\label{eq:3.3}
\alpha'(\theta_0) = \pi(\theta_0) P'(\theta_0)g.
\end{equation}
We now turn to making this argument rigorous.

We start by assuming that $(P(\theta): \theta \in \Lambda)$ itself satisfies the absolute continuity condition:
\begin{itemize}
	\item[A4.] 
	The family of one-step transition kernels $(P(\theta): \theta \in \Lambda)$ is absolutely continuous with respect to $P(\theta_0)$, in the sense that there exists a density $(p(\theta, x, y): \theta\in \Lambda, x,y \in S)$ for which 
	\begin{equation*}
	P(\theta, x, dy) = p(\theta, x, y) P(\theta_0, x, dy)
	\end{equation*}
	for $x, y \in S$, and $\theta\in \Lambda$. Furthermore, there exists $\epsilon > 0$ for which $p(\cdot, x, y)$ is continuously differentiable on $[\theta_0-\epsilon, \theta_0 + \epsilon]$ for each $x,y\in S$. 
\end{itemize}
Set $\omega_\epsilon(x,y) = \sup_{|\theta-\theta_0|<\epsilon}|p'(\theta,x,y)|.$ Our next assumption involves a (uniform) minorization condition over the set $A$, which is standard in the theory of Harris recurrent Markov chains; see, for example, p.102 of \cite{meyn09}
\begin{itemize}
	\item[A5.] 
	There exists $\epsilon > 0$, a subset $A\subseteq S$, an integer $n\geq 1$, $\lambda>0$, and a probability $\varphi$ for which 
	\begin{equation*}
	P^n(\theta, x, dy) \geq \lambda \varphi(dy)
	\end{equation*}
	for $x\in A$, $y\in S$, and $|\theta-\theta_0| < \epsilon$.
\end{itemize}
For $a, b\in \R$, let $a\vee b \triangleq \max(a,b)$. We can now state our main theorem on differentiability of stationary expectations.
\begin{theorem}\label{thm:3}
	Assume that A4 and A5 hold. 
	Let $\kappa: \R_+ \to \R_+$ be a function for which $\kappa(x)\geq x$ and $\kappa(x)/x \to\infty$ as $x \to \infty$. 
	Suppose that there exist positive constants $\epsilon$, $c_0$, and $c_1$, and non-negative finite-valued functions $q$, $v_0$, and $v_1$ for which 
	\begin{align}
	(P(\theta)v_0)(x) &\leq v_0(x) - (\rvin{q(x)}\vee 1) + c_0 \I(x\in A), \label{eq:3.4}\\
	(P(\theta)v_1)(x) 
	&\leq 
	v_1(x) - 
	\kappa\left( \int_S ( 1 \vee \omega_\epsilon(x,y) ) (v_0(y) + 1) P(\theta, x, dy)\right)
	+ c_1 \I(x \in A),
	\label{eq:3.5}
	\end{align}
	for $x\in S$, $|\theta-\theta_0|<\epsilon$, and 
	\begin{equation}\label{bound:sup_v_0}
	\sup_{x\in A} v_0(x) < \infty
	\end{equation}
	Then:
	\begin{itemize}
		\item[(i)]
		There exists an open interval $\mathcal N$ containing $\theta_0$ for which $X$ is a positive recurrent Harris chain under $P(\theta)$ for each $\theta\in \mathcal N$;
		\item[(ii)]
		There exists a unique stationary distribution $\pi(\theta)$ satisfying $\pi(\theta) = \pi(\theta)P(\theta)$ for each $\theta\in \mathcal N$ and $\pi(\theta)\rvin{q} \leq c_0$ for $\theta\in \mathcal N$;
		\item[(iii)] 
		For each $f$ such that $|f(x)|\leq q(x) \vee 1$ for $x\in S$, there exists a solution $g$ (denoted $g=\Gamma f$) of Poisson's equation satisfying 
		\begin{equation*}
		((I-P(\theta_0))g)(x) = f(x) - \pi(\theta_0) f
		\end{equation*}
		for $x\in S$, and $|g(x)| = |(\Gamma f)(x)| \leq a(v_0(x) + 1)$ for $x \in S$, where $a$ is a finite constant;
		\item[(iv)]
		For each $f$ such that $|f(x)| \leq q(x) \rvin{\vee} 1$, $\alpha(\theta) = \pi(\theta)f$ is \what{continuously} differentiable at $\theta_0$, and
		\begin{equation}\label{C}
		\alpha'(\theta_0) = \int_S \pi(\theta_0, dx) \int_S p'(\theta_0, x,y) (\Gamma f) (y) P(\theta_0, x, dy).
		\end{equation}
	\end{itemize}
\end{theorem}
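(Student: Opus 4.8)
The plan is to establish the four conclusions in order, with the bulk of the work going into the ergodicity/recurrence infrastructure (i)--(iii) that makes the formal computation (\ref{eq:3.3a}) rigorous. First I would observe that assumption (\ref{eq:3.4}), combined with A5 and the fact that $q \vee 1 \geq 1$, is exactly a geometric-drift-plus-minorization package of the Meyn--Tweedie type, so it yields positive Harris recurrence uniformly for $\theta$ in a neighborhood $\mathcal N$ of $\theta_0$; here I would exploit that A4 gives continuous dependence of $P(\theta)$ on $\theta$ so that the drift inequality (with a slightly worsened constant) persists on $\mathcal N$, just as in the proof of Theorem~\ref{thm:4}. This gives (i). For (ii), the existence and uniqueness of $\pi(\theta)$ follows from Harris recurrence, and the moment bound $\pi(\theta)q \leq c_0$ follows by integrating the drift inequality (\ref{eq:3.4}) against $\pi(\theta)$: stationarity gives $\pi(\theta)(P(\theta)v_0) = \pi(\theta)v_0$, so the inequality collapses to $\pi(\theta)(q\vee 1) \leq c_0\, \pi(\theta,A) \leq c_0$, whence $\pi(\theta)q \leq c_0$.

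For (iii), the plan is to construct the Poisson solution via the standard representation $g = \Gamma f = \sum_{n=0}^\infty (P^n(\theta_0) - \Pi(\theta_0))f_c$ where $f_c = f - \pi(\theta_0)f$, and to control it using the Lyapunov function $v_0$. The drift condition (\ref{eq:3.4}) with target $q \vee 1 \geq |f|$ is precisely what is needed: applying Proposition~\ref{prop:1}-style telescoping (or directly invoking a solidarity theorem such as Theorem~2.3 of \cite{glynn96}, already used in the proof of Theorem~\ref{thm:4}) shows that the series converges and obeys $|(\Gamma f)(x)| \leq a(v_0(x)+1)$ for a finite constant $a$. The key point here is that $v_0$ dominates the Poisson solution, which is exactly the input the differentiation step will require.

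For (iv), the plan is to make (\ref{eq:3.1})--(\ref{eq:3.3a}) rigorous. Starting from $g = \Gamma f$ I write, exactly as in the heuristic,
\begin{equation*}
\alpha(\theta_0+h) - \alpha(\theta_0) = \pi(\theta_0+h)\big((I - P(\theta_0))g\big) = \pi(\theta_0+h)\big((P(\theta_0+h) - P(\theta_0))g\big),
\end{equation*}
where the first equality uses $\pi(\theta_0+h)\Pi(\theta_0) = \pi(\theta_0)$ to cancel the centering, and the second uses $\pi(\theta_0+h) = \pi(\theta_0+h)P(\theta_0+h)$. Dividing by $h$, using A4 to write the kernel difference via the density $p(\theta_0+h,x,y) - p(\theta_0,x,y)$, and passing to the limit through $\pi(\theta_0+h) \to \pi(\theta_0)$ (continuity of $\pi$, established as in Theorem~\ref{thm:4}) together with $\big(p(\theta_0+h,x,y)-p(\theta_0,x,y)\big)/h \to p'(\theta_0,x,y)$ gives (\ref{C}). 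The role of the \emph{second} Lyapunov function $v_1$ and the superlinear $\kappa$ is to supply the dominating integrand that legitimizes the Dominated Convergence Theorem here: the bound $|g| \leq a(v_0+1)$ from (iii) means the integrand is controlled by $\omega_\epsilon(x,y)(v_0(y)+1)$, and integrating (\ref{eq:3.5}) against $\pi(\theta_0+h)$ (using $\pi(\theta_0+h)q \leq$ const to absorb the $\kappa$ term uniformly) yields a finite, $h$-uniform bound. Continuity of $\alpha'(\cdot)$ then follows by the same continuity-of-$\pi$ and continuous-differentiability-of-$p$ inputs applied to the formula (\ref{C}).

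I expect the main obstacle to be the uniform-in-$h$ domination needed for the Dominated Convergence Theorem in step (iv). The difference quotient of $p$ is only controlled pointwise by $\omega_\epsilon$, and the Poisson solution grows like $v_0$, so the natural dominating function is $\int_S \omega_\epsilon(x,y)(v_0(y)+1)\,P(\theta,x,dy)$; this is integrable against $\pi(\theta)$ precisely because of the delicate drift inequality (\ref{eq:3.5}), whose superlinear outer function $\kappa$ is what guarantees that $\pi(\theta)$ integrates the full dominating quantity \emph{uniformly} over the neighborhood $\mathcal N$ rather than merely at $\theta_0$. Verifying this uniformity carefully---that the same integrable majorant works simultaneously for all small $h$ and that it survives the $\theta$-dependence of the kernel inside the integral---is the technically delicate heart of the argument; everything else is a faithful rigorous transcription of the heuristic display (\ref{eq:3.1})--(\ref{eq:3.3}).
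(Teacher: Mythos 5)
Your infrastructure for (i)--(iii) is essentially the paper's (modulo two small slips: (\ref{eq:3.4}) is an additive, not geometric, drift condition, and your one-line derivation of $\pi(\theta)q\leq c_0$ by cancelling $\pi(\theta)(P(\theta)v_0)=\pi(\theta)v_0$ is invalid unless you already know $\pi(\theta)v_0<\infty$; the paper instead invokes Corollary 4 of Glynn--Zeevi, which handles this by truncation). The genuine gap is in (iv). You propose to pass to the limit in $\pi(\theta_0+h)\big((P(\theta_0+h)-P(\theta_0))/h\big)\Gamma f$ by ``Dominated Convergence plus continuity of $\pi$, established as in Theorem~\ref{thm:4}.'' Neither ingredient is available in the form you use it. DCT does not apply when the \emph{measure} $\pi(\theta_0+h)$ varies with $h$; and the continuity of $\pi(\cdot)$ in Theorem~\ref{thm:4} is obtained from the identity $\pi(\theta_0+h)-\pi(\theta_0)=\pi(\theta_0+h)(P(\theta_0+h)-P(\theta_0))(I-P(\theta_0)+\Pi(\theta_0))^{-1}$, whose key input --- invertibility of $I-P(\theta_0)+\Pi(\theta_0)$ on $\mathcal L_w$ --- comes from the geometric drift hypothesis of that theorem. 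Theorem~\ref{thm:3} is designed precisely for the non-geometrically-ergodic case, so that route is closed: you cannot import the continuity of $\pi$ from Section 2 here.

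The paper's mechanism, which is the idea your proposal is missing, is a truncation combined with a \emph{second, nested} application of the Poisson-equation identity. Set $s(x)=\int_S\omega_\epsilon(x,y)(v_0(y)+1)P(\theta_0,x,dy)$ and split the integral over $\{s\geq m\}$ and $\{s<m\}$. On $\{s\geq m\}$ the superlinearity of $\kappa$ together with $\pi(\theta_0+h)\kappa(s)\leq c_1$ gives the bound (\ref{eq:3.7}), which is uniform in $h$ and tends to $0$ as $m\to\infty$ (this part you did identify, as uniform integrability). On $\{s<m\}$ the truncated difference quotient $s_h^m$ is \emph{bounded} by $am$, hence $s_h^m/(am)$ is itself an admissible $f$ in part (iii); applying the identity (\ref{eq:3.3a}) to $s_h^m$ gives $|\pi(\theta_0+h)s_h^m-\pi(\theta_0)s_h^m|=|\pi(\theta_0+h)(P(\theta_0+h)-P(\theta_0))\Gamma s_h^m|\leq a^2m|h|c_1\to0$ as $h\to0$, i.e.\ the estimate (\ref{eq:3.9}). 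This is exactly the step that replaces the varying measure $\pi(\theta_0+h)$ by the fixed measure $\pi(\theta_0)$ without any appeal to operator-norm continuity of $\pi$; afterwards ordinary DCT against the fixed $\pi(\theta_0)$ yields (\ref{eq:3.10}), and letting $h\to0$ then $m\to\infty$ gives (\ref{C}). Without this nested Poisson step (or some substitute establishing total-variation-type continuity of $\pi$ from the present hypotheses --- which would amount to the same trick applied to bounded functions), your argument for (iv), and consequently for the continuity of $\alpha'$, does not close.
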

\begin{proof}{Proof.}
It is a standard fact that A5, (\ref{eq:3.4}), and (\ref{bound:sup_v_0}) imply that $X$ is a positive recurrent Harris chain under $P(\theta)$ for $\theta \in \mathcal N$ (where $\mathcal N$ is selected so that A5, (\ref{eq:3.4}) and (\ref{bound:sup_v_0}) are all in force); see, for example, \citet[p.313]{meyn09}. As a consequence, there exists a unique stationary distribution $\pi(\theta)$ for each $\theta \in \mathcal N$. Furthermore, (\ref{eq:3.4}) implies that the bound $\pi(\theta)\rvin{q}\leq c_0$ holds for $\theta\in \mathcal N$; see, for example, Corollary 4 of \cite{glynn08}. Because $X$ is Harris recurrent (and (\ref{eq:3.4}) holds), one can now invoke Theorem 2.3 of \cite{glynn96} to obtain (iii).

Turning to (iv), note that (\ref{eq:3.5}) guarantees that $\pi(\theta)\rvin{v_0} < \infty $ for $\theta\in \mathcal N$, so that $\pi(\theta)|\Gamma f| < \infty$. 
\cf{
Note that $\pi|f|<\infty$ is equivalent to $(\pi P)|f|<\infty$ and hence conditional Fubini applies: $\pi (P f) = (\pi P) f = \pi f$. Therefore,
$
\pi (Pf-f) = 0.
$
This validates (\ref{eq:3.3a}).}%
With the above conclusions having been verified, we can now appeal to (\ref{eq:3.3a}) to write
\begin{align}
\pi(\theta_0+h) f - \pi(\theta_0) f
&= \pi(\theta_0 + h) \big( P(\theta_0+h) - P(\theta_0)\big) \Gamma f\nonumber\\
&=\int_S \pi(\theta_0 + h, dx) \int_S \big(P(\theta_0+h, x, dy) - P(\theta_0, x, dy)\big)(\Gamma f)(y).\label{eq:3.6}
\end{align}
Set $s(x) = \int_S \omega_\epsilon(x,y) (v_0(y) + 1) P(\theta_0, x, dy)$ and put $\I_m(x) = \I(s(x) \geq m)$, $\I_m^c(x) = \I(s(x) < m)$. 
\cf{ Then $s(x)$ bound the finite difference 
	$$\frac{P_h-P}{h}\Gamma f \leq a s$$
	and from Glynn and Zeevi (2008) and (\ref{eq:3.5})
	\begin{align*}
	\pi(\theta) s 
	&\leq \int\pi(\theta,dx) \int(\omega_\epsilon(x,y)\vee 1)(v_0(y) + 1) P(\theta_0, x, dy)\\
	&\leq \int\pi(\theta,dx) \int\kappa\big((\omega_\epsilon(x,y)\vee 1)(v_0(y) + 1)\big) P(\theta_0, x, dy)\\
	&\leq c_1
	\end{align*}
}
Observe that since $|p(\theta_0+h, x, y) - p(\theta_0, x, y) |/h \leq \omega_\epsilon(x,y)$, and $\rvin{|}(\Gamma f)(y)\rvin{|} \leq a(v_0(y) +1 )$,
\begin{align}
&\int_S \pi(\theta_0 + h, dx) \I_m(x) \rvin{\bigg|}\left(\frac{P(\theta_0+h)-P(\theta_0)}{h} (\Gamma f)\right)(x)\rvin{\bigg|}\nonumber\\
&\leq \int_S \pi(\theta_0 + h, dx) \I_m(x) \int_S \omega_\epsilon(x,y) \,a(v_0(y)+1) P(\theta_0, x, dy)\nonumber\\
&\leq a\int_S \pi(\theta_0 + h, dx) \I_m(x) s(x)\nonumber\\
&\leq \frac{a}{\inf\{ \frac{\kappa(s(y))}{s(y)}: s(y)\geq m)\}} \int_S \pi(\theta_0+h, dx) \frac{\kappa(s(x))}{s(x)}s(x)\nonumber\\
&\leq \frac{a}{\inf\{ \frac{\kappa(s(y))}{s(y)}: s(y)\geq m)\}}\int_S \pi(\theta_0+h,dx) \kappa\left(\int_S (1\vee\omega_\epsilon(x,y))(v_0(y) + 1) P(\theta_0, x, dy)\right)\nonumber\\
&\leq \frac{a}{\inf\{ \frac{\kappa(s(y))}{s(y)}: s(y)\geq m)\}} c_1 \label{eq:3.7}
\end{align}
where the last inequality follows from (\ref{eq:3.5}) and Corollary 4 of \cite{glynn08}.

On the other hand, 
\begin{align*}
\int_S \pi(\theta_0+h,dx) \I_m^c (x) \frac{P(\theta_0+h)-P(\theta_0)}{h}(\Gamma f) (x)
\triangleq \int_S \pi(\theta_0+h, dx) s_h^m(x) = \pi(\theta_0+h)s_h^m,
\end{align*}
where 
\begin{align*}
|s_h^m(x) | \leq a\int_S \omega_\epsilon(x,y) (v_0(y)+1) P(\theta_0, x, dy)\I(s(x) < m)  \leq am,
\end{align*}
so $s_h^m$ is bounded. It follows that 
\begin{equation*}
\pi(\theta_0 + h) s_h^m - \pi(\theta_0)s_h^m = \pi(\theta_0 + h) (P(\theta_0+h)-P(\theta_0))(\Gamma s_h^m).
\end{equation*}
\cf{From (\ref{eq:3.4})
	$$\Gamma 1 \leq a(v_0+1)$$
	and hence
	$$\Gamma s_h^m \leq (am) (a(v_0+1))$$
}
Note that $\left|\frac{\Gamma s_h^m}{am}\right| \leq a(v_0+1)$ (because $\left| \frac{s_h^m}{am} \right|\leq q \vee 1$),
and hence
\begin{align}
|\pi(\theta_0 + h) s_h^m - \pi(\theta_0) s_h^m|
&\leq a^2 m|h| \int_S \pi(\theta_0+h,dx) \int_S \omega_\epsilon(x,y) P(\theta_0, x,dy) (v_0(y) + 1)\nonumber\\
&\leq a^2 m|h| \int_S \pi(\theta_0 + h, dx) s(x)\nonumber\\
&\leq a^2 m|h| c_1\to 0 \label{eq:3.9}
\end{align}
as $h\to 0$. Finally, 
\begin{equation*}
\int_S \pi(\theta_0, dx) s_h^m(x) = \int_S \pi(\theta_0, dx) \I_m^c(x)\int_S \frac{p(\theta_0+h, x, y)  - p(\theta_0, x, y)}{h} P(\theta_0, x, dy) (\Gamma f) (y)
\end{equation*}
and
\begin{equation*}
\frac{p(\theta_0+h, x, y) - p(\theta_0, x, y)}{h} \to p'(\theta_0, x, y)
\end{equation*}
as $h \searrow 0$. Furthermore, $|p(\theta_0+h, x, y) - p(\theta_0, x, y) |/h \leq \omega_\epsilon(x,y)$, $(\Gamma f)(y) \leq a(v_0(y) +1 )$, and 
\begin{align*}
\int_S \pi(\theta_0, dx) \int_S \omega_\epsilon(x,y) P(\theta_0, x, dy) (v_0(y) + 1) \leq \int_S \pi(\theta_0, dx) s(x) \leq c_1,
\end{align*}
so the Dominated Convergence Theorem implies that
\begin{equation}\label{eq:3.10}
\int_S \pi(\theta_0, dx) s_h^m(x) \to \int_S \pi(\theta_0, dx) \I_m^c(x) \cdot \int_S p'(\theta_0, x, y) P(\theta_0, x, dy) (\Gamma f)  (y)
\end{equation}
as $h\searrow 0$.

If we first let $h\to 0$ and then let $m \to \infty$, (\ref{eq:3.6}) through (\ref{eq:3.10}) imply part (iv) of our theorem.

\what{
Finally, turning to the continuity of the derivative, 
note that the exactly same argument as above gives
$
\alpha'(\theta_0+h) = \int_S \pi(\theta_0+h, dx) p'(\theta_0+h,x,y) \Gamma_{\theta_0+h} f(x) P(\theta_0,x,dy)
$ 
where $\Gamma_{\theta_0+h}f$ is the solution $g$ of the Poisson equation $g - P(\theta_0+h)g = f - \pi(\theta_0+h) f$.
Since
$$
\alpha'(\theta_0+h) - \alpha'(\theta_0)
=
\left(\alpha'(\theta_0+h) - \frac{\alpha((\theta_0+h)+(-h)) - \alpha(\theta_0+h)}{-h}\right) - \left(\alpha'(\theta_0) - \frac{\alpha(\theta_0+h)-\alpha(\theta_0)}{h}\right),
$$
and we have seen that the second term vanishes as $h\to 0$, we are done if we show that the first term also vanishes. 
Similarly as in (\ref{eq:3.3a}),
$
\alpha(\theta_0) - \alpha(\theta_0+h) = \pi(\theta_0) (P(\theta_0)-P(\theta_0+h)) \Gamma_{\theta_0+h}f
$. 
Therefore,
\begin{align}
&\alpha'(\theta_0+h) - \frac{\alpha((\theta_0+h)+(-h)) - \alpha(\theta_0+h)}{-h}
=
\alpha'(\theta_0+h) + \frac{\alpha(\theta_0) - \alpha(\theta_0+h)}{h}
\nonumber
\\
&
= 
\int_S \pi(\theta_0+h, dx) p'(\theta_0+h,x,y) \Gamma_{\theta_0+h} f(x) P(\theta_0,x,dy)
\nonumber
\\
&
\qquad
+
\int_S \pi(\theta_0, dx) \frac{p(\theta_0,x,y) - p(\theta_0+h,x,y)}{h} \Gamma_{\theta_0+h} f(x) P(\theta_0,x,dy)
\nonumber
\\
&
= 
\int_S \big(\pi(\theta_0+h, dx) - \pi(\theta_0, dx)\big) p'(\theta_0+h,x,y) \Gamma_{\theta_0+h} f(x) P(\theta_0,x,dy)
\label{eq:first_term_pf_t41}
\\
&
\qquad
+
\int_S \pi(\theta_0, dx) \left(p'(\theta_0+h,x,y)+\frac{p(\theta_0,x,y) - p(\theta_0+h,x,y)}{h}\right) \Gamma_{\theta_0+h} f(x) P(\theta_0,x,dy).
\label{eq:second_term_pf_t41}
\end{align}
Upon a perusal of the proof of Theorem 2.3 of \cite{glynn96}, one can see that the uniform majorization condition A5 and the uniform Lyapunov inequality (\ref{eq:3.4}) implies $|\Gamma_{\theta_0+h} f(x)| \leq a(v_0(x)+1)$ with the same constant $a$ as in (iii). One can prove that 
(\ref{eq:first_term_pf_t41}) vanishes as $h\to 0$ by the same argument as (\ref{eq:3.7}) and (\ref{eq:3.9}). 
On the other hand, 
(\ref{eq:second_term_pf_t41}) vanishes by the continuous differentiability condition A4 of $p$ and the dominated convergence along with (\ref{eq:3.5}). 
}
\end{proof}

As for Theorem~\ref{thm:1}, the proof also establishes a computable bound on $|\alpha'(\theta_0)|$, namely $|\alpha'(\theta_0)|\leq \rvin{a} c_1$ \rvin{where $a$ is the constant in (iii)}. 
\cf{maybe we want to explain how to compute $a$ or at least give a formula?} 
Also, as in Section~\ref{sec:random_horizon}, we can further simplify the condition when $X$ is the solution to the stochastic recursion (\ref{eqA}), in which the parameter $\theta$ affects only the distribution $Z_1$. When $p(\cdot,z)$ is continuously differentiable, (\ref{eq:3.5}) may be simplified as 
\begin{equation}
(P(\theta)v_1) (x) \leq v_1(x) - \kappa\left(\E ^{\theta_0}\Big(1\vee\sup_{|\theta-\theta_0|<\epsilon}|p'(\theta,Z_1) |\Big)\big(v_0(r(x,Z_1))+1\big)p(\theta,Z_1)\right) + c_1 \I(x\in A).
\end{equation}
With A5, (\ref{eq:3.4}), and (\ref{bound:sup_v_0}) also in force, this ensures the differentiability of $\alpha(\cdot)$ at $\theta_0$, with $\alpha'(\theta_0)$ given by 
\begin{equation}
\alpha'(\theta_0) = \int_S \pi(\theta_0, dx) \E^{\theta_0} (\Gamma f) (r(x,Z_1))p'(\theta_0,Z_1).
\end{equation}

A useful example on which to illustrate the above theory (and an important model in its own right) is that of the waiting time sequence $W=(W_n: n\geq 0)$ for the single-server G/G/1 queue, with first come first serve queue discipline. Let $V_n$ be the arrival time for the $\rvin{n}$\textsuperscript{th} customer, and let $\chi_{n+1}$ be the inter-arrival time that elapses between the arrival of the $\rvin{n}$\textsuperscript{th} and $(n+1)$\textsuperscript{st} customer. If $W_n$ is the waiting time (exclusive of service) for customer $n$, the $W_n$'s satisfy the stochastic recursion
\begin{equation}
W_{n+1} = [W_n + V_n - \chi_{n+1}]^+
\end{equation}
for $n\geq 0$, where $[x]^+ \triangleq \max(x,0).$ Assume that the $V_n$'s are iid, independent of the $\chi_n$'s (which are also assumed iid). Then, $W$ is a Markov chain taking values in $S = [0,\infty)$. It is well known that $W$ is a positive recurrent Harris chain if $\E V_0 < \E \chi_1$, and that $\E V_0^{p+1}\rvin{<\infty}$ is then a necessary and sufficient condition for guaranteeing the finiteness of $\pi f_p$, where $f_p(x) = x^p$ (with $p>0$); 
see, for example, \cite{kiefer1956}. 
This suggests that it then typically will be the case that the $p$\textsuperscript{th} moment should be differentiable when $\E V_0 ^{p+1}< \infty$. 

We consider this problem in the special case in which the service times are finite mean Pareto random variables (rv's), and $\theta$ influences the scale parameter of the Pareto distribution. In other words, we consider the setting in which 
\begin{equation*}
P^\theta(V_0 > v) = (1+\theta v)^{-\alpha}
\end{equation*} 
for $\alpha > 1$. 
\cf{$\E^\theta v = \frac{1}{\theta}\frac{1}{\alpha-1}$}
In this case, the density of $V_0$ under $P^\theta$ is given by $\theta h_V(\theta v),$ where $h_V(v) = \alpha(1+v)^{-\alpha -1}$, so that 
\begin{equation*}
p(\theta, v) = \left(\frac{\theta}{\theta_0}\right) \left(\frac{1+\theta v}{1+\theta_0 v}\right)^{-\alpha-1}
\end{equation*}
and
\begin{equation*}
p'(\theta, v) = p(\theta, v) \left(\frac{1}{\theta}  - (\alpha + 1)\frac{v}{(1 + \theta v)}\right).
\end{equation*}
Note that both the density $p$ and its derivative (with respect to $\theta$) are bounded functions. Furthermore, the rv $p'(\theta_0, V_i)$ has mean zero under $P^{\theta_0}$. For any $c>0$, the set $A=[0,c]$ is easily seen to satisfy condition A5, and A4 is trivially verified (with $\omega_\epsilon(\cdot)$ bounded). Then, if $v_0(x) = a_1 x^{p+1}$, $v_1(x) = a_2 x^{r+2}$, and $\kappa(x)=x^{\frac{1+r}{1+p}}$ (with $r>p$ and $a_1$, $a_2$ chosen suitably), we see that (\ref{eq:3.4}), (\ref{eq:3.5}), and (\ref{bound:sup_v_0}) all hold, guaranteeing the differentiability of $\pi(\theta)f_p$ (according to Theorem~\ref{thm:3}). 

For example, to verify (\ref{eq:3.4}), we note that
\begin{equation*}
x^{-p}\big[(P(\theta)v_0)(x) - v_0(x)\big] = a_1 x \E ^{\theta_0} \left(\left[1+\frac{1}{x} \left(\frac{\theta_0}{\theta} V_0 - \chi_1\right)\right]^+\right)^{p+1} - a_1 x.
\end{equation*}
Observe that as $x \to \infty$,
\begin{align*}
&x f_{p+1}\left(\left[1+\frac 1 x \left(\frac{\theta_0}{\theta} V_0 - \chi_1\right)\right]^+\right) - x\\
&= x \left(f_{p+1}(1) + f_{p+1}'(1)\left(\frac{1}{x}\right)\left(\frac{\theta_0}{\theta} V_0 - \chi_1\right)\right) - x + o(1) \qquad a.s.\\
&= \rvin{(}p\rvin{+1)}\left(\frac{\theta_0}{\theta} V_0 - \chi_1\right) + o(1) \qquad a.s.
\end{align*}
\rvout{as $x \to \infty$,} where $o(1)$ represents a function $k(x)$ such that $k(x) \to 0$ as $x \to \infty$ \rvin{uniformly in a neighborhood of $\theta_0$}. 
In addition, note that for $p>0$ and $x>0$, the mean value theorem implies that $f_{p+1}(1+x) = f_{p+1}(1) + f'_{p+1}(1+\xi)x$ for some $\xi \in [0,x]$, so that $f_{p+1}(1+x) = f_{p+1}(1) + (p+1) (1+\xi)^{p} x \leq 1 + (p+1)(1+x)^px$. Consequently, 
\begin{align*} 
\hcancel{
&\left|\ 
	x\left(
		\left[
			1+\frac{1}{x}
			\left(
				\frac{\theta_0}{\theta} V_0 - \chi_1
			\right)
		\right]^+
	\right)^{p+1} - x\ 
\right|
\ 
\\ 
}
&
	x\left(
		\left[
			1+\frac{1}{x}
			\left(
				\frac{\theta_0}{\theta} V_0 - \chi_1
			\right)
		\right]^+
	\right)^{p+1} - x\ 
\ 
\\ 
&\leq x 
\left(
	1+\frac 1 x\frac{\theta_0}{\theta} V_0
\right)^{p+1} - x\\
&\leq x\left(1 + (p+1) \left(1 + \frac{1}{x}\frac{\theta}{\theta_0}V_0\right)^p \frac{1}{x}\frac{\theta}{\theta_0} V_0\right) - x\\
&\hcancel{
	\leq (p+1) \left(1 + \frac{1}{x}\frac{\theta}{\theta_0}V_0\right)^{p+1}
}\\
&\leq (p+1) \left(1 + \frac{1}{x}\frac{\theta}{\theta_0}V_0\right)^{p}\frac{\theta}{\theta_0}V_0
\end{align*}
\cf{
	\rvin{
	and
	\begin{align*} 
	&
	x-
		x\left(
			\left[
				1+\frac{1}{x}
				\left(
					\frac{\theta_0}{\theta} V_0 - \chi_1
				\right)
			\right]^+
		\right)^{p+1} 
	\\ 
	&\leq x - x\left(
			\left[
				1-\frac{\chi_1}{x}
			\right]^+
		\right)^{p+1} \\
	&\leq x - x\left(
				1-\frac{\chi_1}{x}
		\right)^{p+1} 
	\end{align*}
	}
}
\cf{For $p \in (-1,0)$, mean value theorem implies
	$f_{p+1}(1+x) = f_{p+1}(1) + f'_{p+1}(1+\xi)x$ for some $\xi \in [0,x]$, so that $f_{p+1}(1+x) = f_{p+1}(1) + (p+1) (1+\xi)^{p} x \leq 1 + (p+1)x$. Consequently, 
\begin{align*} 
&
x\left(
	\left[
		1+\frac{1}{x}
		\left(
			\frac{\theta_0}{\theta} V_0 - \chi_1
		\right)
	\right]^+
\right)^{p+1} - x\  
\\ 
&\leq x 
\left(
	1+\frac 1 x\frac{\theta_0}{\theta} V_0
\right)^{p+1} - x\\
&\leq x\left(1 + (p+1)  \frac{1}{x}\frac{\theta}{\theta_0} V_0\right) - x\\
&\leq (p+1)\frac{\theta}{\theta_0}V_0
\end{align*}

and
\begin{align*} 
&
x-
x\left(
	\left[
		1+\frac{1}{x}
		\left(
			\frac{\theta_0}{\theta} V_0 - \chi_1
		\right)
	\right]^+
\right)^{p+1} 
\\ 
&\leq x - x\left(
		\left[
			1-\frac{\chi_1}{x}
		\right]^+
	\right)^{p+1} 
\leq x - x\left(
		\left[
			1-\frac{\chi_1}{x}
		\right]^+
	\right) 
\leq x - x\left(
			1-\frac{\chi_1}{x}
	\right)
\leq \chi_1
\end{align*}
}
Since $\E V_0^{p+1}< \infty$, \rvout{the Dominated Convergence theorem ensures} \rvin{Fatou's lemma applies to ensure} that
\hcancel{
\begin{equation*}
\E^{\theta_0} \left(x f_{p+1} \left(\left[1+\frac{1}{x}\left(\frac{\theta_0}{\theta}V_0 - \chi_1\right)\right]^+\right)-x\right) \to (p+1)\E ^{\theta_0} \left(\frac{\theta_0}{\theta}V_0 - \chi_1\right) = (p+1)\left(\frac1 {\theta\rvin{(\alpha - 1)}} - \E \chi_1\right)
\end{equation*}
}
\rvin{
	\begin{align*}
	\limsup_{x\to\infty}\,\sup_{\theta} \E^{\theta_0}  \left(x f_{p+1} \left(\left[1+\frac{1}{x}\left(\frac{\theta_0}{\theta}V_0 - \chi_1\right)\right]^+\right)-x\right) 
	&\leq (p+1)\E ^{\theta_0}\sup_{\theta} \left(\frac{\theta_0}{\theta}V_0 - \chi_1\right) \\
	&= (p+1)\sup_{\theta}\left(\frac1 {\theta(\alpha - 1)} - \E \chi_1\right)
	\end{align*}
}
as $x\to\infty$ (with convergence that is uniform in a neighborhood of $\theta_0$). If we choose $a_1$ so that $a_1 \rvin{(}p\rvin{+1)\sup_{\theta}}(\frac{1}{\theta\rvin{(\alpha - 1)}} - \E \chi_1) \leq -2$ \rvout{(uniformly in $\theta$)} and $c$ so that
\begin{equation*}
a_1\rvin{\sup_{\theta}}\E^{\theta_0} \left(xf_{p+1}\left(\left[1+\frac{1}{x}\left(\frac{\theta_0}{\theta}V_0 - \chi_1\right)\right]^+\right)-x\right) \leq -1
\end{equation*}
for $x\geq c$, then (\ref{eq:3.4}) is validated. A similar argument applies to (\ref{eq:3.5}), in view of the boundedness of $\omega_\epsilon(\cdot)$. Our argument therefore establishes that $\pi f_p$ is differentiable if $\E V_0 ^q < \infty$ for some $q>p+2$. This is not quite the ``correct'' result (in that we previously argued that $\E V_0^{p+1}<\infty$ should be sufficient.)

The reason that our argument fails to provide optimal condition here has to do with special random walk structure that is present in the process $W$ that is difficult for general machinery to exploit. The challenge arises at (\ref{eq:3.3a}) above. Note that the argument just provided for $W$ involves using $v_0=a_1 f_{p+1}$ as a bound on the solution $g$ to Poisson's equation for $f_p$. (As we shall see in a moment, $g$ is indeed exactly of order $x^{p+1}$). The problem is that neither $P(\theta_0+h) f_{p+1}$ nor $P(\theta_0)f_{p+1}$ in (\ref{eq:3.3a}) are integrable with respect to $\pi(\theta_0+h)$ unless $\E V_0 ^{p+2}<\infty$. This is what leads to the extra moment appearing in our argument for $W$ above. Thus, any argument that yields differentiability under the hypothesis $\E V_0^{p+1}<\infty$ must take advantage of the fact that the random walk structure of $W$ yields the integrability of $(P(\theta_0 + h) - P(\theta_0))g$ under $\E V_0^{p+1}<\infty$ without demanding the integrability of $P(\theta_0)g$ and $P(\theta_0+h)g$ separately. 

It is shown in \cite{glynn96} that, in view of the fact that $W$ regenerates at hitting times of $\{0\}$, the solution $g$ to Poisson's equation for $f_p$ can be expressed as 
\begin{equation}\label{A}
g(x) = \E_x ^{\theta_0} \sum_{j=0}^{\tau(0)-1} (f_p(W_j)-\pi(\theta_0)f_p),
\end{equation}
where $\tau(0) = \inf\{n\geq 1: W_n = 0\}$ is the hitting time of $\{0\}$. Let $Z_j = V_{j-1} - \chi_j$, $S_j = Z_1 + \cdots + Z_j$, (for $j\geq 1$), $\tau_x(0) = \inf\{j\geq 1: x + S_j \leq 0\}$, $\mu = \E Z_1$, and note that (\ref{A}) implies that
\begin{align}
(P(\theta_0 + h)g)(x) - (P(\theta_0)g)(x)
&= \E^{\theta_0} g(W_1) [p(\theta_0 + h, V_0)-1]\label{B}\\
&= \E^{\theta_0} \sum_{j=1}^{\tau_x(0)\rvin{-1}} [(x+S_j)^p - \pi(\theta_0) f_p] (p(\theta_0 + h, V_0) -1 ) \I(x + Z_1 > 0).\nonumber
\end{align}
But
\begin{align*}
\sum_{j=1}^{\tau_x(0)\rvin{-1}} (x+S_j)^p (p(\theta_0+h,V_0)-1)
&= x^p\sum_{j=1}^{\tau_x(0)\rvin{-1}} [(1+\frac{S_j-V_0}{x})^p+p\xi_j(x)^{p-1}\frac{V_0}{x}](p(\theta_0 + h, V_0)-1)
\end{align*}
where $\xi_j(x)$ lies between $1+S_j/x-V_0/x$ and $1+S_j/x$. It is easily argued, based on Riemann sum approximations, that
\begin{align*}
\sum_{j=1}^{\tau_x(0)\rvin{-1}} \xi_j(x)^{p-1} \frac{1}{x} 
&\to \int_0^{\rvin{1/}|\mu|} (1+\mu s)^{p-1}ds \qquad a.s.\\
&=\frac{1}{|\mu|}\cdot \frac{1}{p}
\end{align*}
as $x \to \infty$. Furthermore, $p(\theta_0+h,V_0)-1$ is a mean zero rv that is independent of $(1+(S_j-V_0)/x)^p$ for $j\geq 1$ and $\E \tau_x(0) \sim x / |\mu|$ as $x\to \infty$ (where $a_1(x) \sim a_2(x)$ as $x\to \infty$ means that $a_1(x)/a_2(x) \to 1$ as $x \to \infty$). In view of (\ref{B}), this suggests that 
\begin{equation*}
(P(\theta_0+h)g)(x) - (P(\theta_0)g)(x) \sim 
\frac{x^p}{|\mu|} \E V_0 (p(\theta_0+h,V_0)-1)
\end{equation*}
as $x\to \infty$ (i.e., one power lower than the growth of $g$ itself). Thus, this style of argument can successfully deal with the integrability issue discussed earlier, and leads to a validation of the derivative formula (\ref{C}) for $W$ under the assumption $\E V_0^{p+1}<\infty$. 
A rigorous statement and the remaining details of the proof can be found in the Appendix. 
This differentiation result for $W$ can also be found in \cite{heidergott09}, with a different (and longer) proof, and with some steps that appear to be incomplete. (In particular, the paper asserts that $\rvin{\E_x ^{\theta_0} \sum_{j=0}^{\tau(0)-1} f_p(W_j)}$ is bounded \rvin{for any fixed $\theta$ and $p$, which implies that our function $g$ grows at most linearly regardless of $p$}; see p.248).

\appendix

\section{The G/G/1 Queue Example.}\label{appendix:A}
In this section, we prove the following statement: if $\P^{\theta}(V_0 > v) = (1+\theta v)^{-r-2}$, 
\cf{this means that $V_0$ has almost $(r+1)$-st moment (for all $\theta$) and $\pi(\theta)$ has $r$-th moment.}%
then $\alpha(\theta) = \pi(\theta)f_p$ is differentiable for $1 \leq p < r$, and the derivative is 
\begin{equation}\label{eq:a.0}
\alpha'(\theta_0) 
= \E_{\pi(\theta_0)}^{\theta_0} p'(\theta_0,V_0)\Gamma f_p(W_1). 
\end{equation}
It turns out to be handy to have the following bound. The proof of the claim will be provided at the end of this section. 
\begin{claim} Let $f_{p;m} \triangleq m\wedge f_p$.There is a constant $d>0$ and $h_0>0$ such that
\begin{equation}\label{eq:a.1}
|(P(\theta_0+h)\Gamma f_{p;m}-P(\theta_0)\Gamma f_{p;m})(x)| \leq hd(x^p+1)
\end{equation}
for $h<h_0$ and $m \in [0,\infty]$.
\end{claim}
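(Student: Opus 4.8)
The plan is to start from the probabilistic representation of the difference already isolated in the main text. Writing $g_m = \Gamma f_{p;m}$ and $\psi_h(v) = p(\theta_0+h,v) - 1$, and using that $\theta$ enters only through the law of $V_0$ (so that $p(\theta_0,\cdot)\equiv 1$), the same computation as in~(\ref{B}) gives
\[
(P(\theta_0+h)g_m - P(\theta_0)g_m)(x) = \E^{\theta_0}\big[\psi_h(V_0)\,g_m([x+V_0-\chi_1]^+)\big].
\]
Two features of $\psi_h$ drive the argument: first, since $p(\theta_0+h,\cdot)$ is the likelihood ratio of $V_0$ under $\theta_0+h$ relative to $\theta_0$, the weight is centered, $\E^{\theta_0}\psi_h(V_0)=0$; second, in the Pareto example both $p$ and $p'$ are bounded, so $|\psi_h(v)|\le C|h|$ uniformly in $v$ for all small $h$. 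The naive estimate $|\psi_h|\,|g_m|\le C|h|\,O(x^{p+1})$ (recall $|g_m|\le a(v_0+1)=O(x^{p+1})$ uniformly in $m$ by Theorem~\ref{thm:3}(iii)) is one power of $x$ too weak, so the centering must be used to gain a factor of $x$.

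First I would condition on $\chi_1$, which is independent of $V_0$, and exploit the centering to subtract a reference that does not depend on $V_0$: since $g_m([x-\chi_1]^+)$ is $\sigma(\chi_1)$-measurable,
\[
(P(\theta_0+h)g_m - P(\theta_0)g_m)(x) = \E^{\theta_0}\big[\psi_h(V_0)\big(g_m([x+V_0-\chi_1]^+) - g_m([x-\chi_1]^+)\big)\big].
\]
Because $0 \le [x+V_0-\chi_1]^+ - [x-\chi_1]^+ \le V_0$, the problem now reduces entirely to controlling the increments of $g_m$ over an interval of length at most $V_0$ sitting below $x+V_0$.

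The technical core is therefore a Lipschitz-type bound on the Poisson solution, uniform in the truncation level: I claim there is a finite $d''$ with $|g_m(b)-g_m(a)| \le d''(1+b^p)(b-a)$ for $0\le a\le b$ and all $m\in[0,\infty]$. Granting this and using $\max([x-\chi_1]^+,[x+V_0-\chi_1]^+)\le x+V_0$, the displayed difference is bounded by
\[
C|h|\,d''\,\E^{\theta_0}\big[(1+(x+V_0)^p)V_0\big] \le C|h|\,d''\,2^p\big(x^p\,\E V_0 + \E V_0^{p+1} + \E V_0\big),
\]
which is $|h|\,d\,(x^p+1)$ because $\E V_0^{p+1}<\infty$ (the Pareto tail $(1+\theta v)^{-r-2}$ gives finite moments of all orders below $r+1$, and $p+1<r+1$ since $p<r$); this is exactly~(\ref{eq:a.1}). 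To prove the increment bound I would run two copies $W^a,W^b$ of $W$ under the same driving sequence $(V_n,\chi_n)$. Non-expansiveness of $x\mapsto[x]^+$ gives $0\le W^b_j-W^a_j\le b-a$ with the gap non-increasing, so the two chains meet (both at $0$) precisely at $\tau^b(0)$. Using the regenerative representation~(\ref{A}) and the fact that the centering constant cancels in each paired term, the surviving contributions are dominated by $f_{p;m}(W^b_j)-f_{p;m}(W^a_j)\le p\,(W^b_j)^{p-1}(b-a)$ (monotonicity of $f_p'$ for $p\ge1$, and the truncation only lowers this difference, which is the source of uniformity in $m$); summing and invoking the standard cycle estimate $\E_b\sum_{j=0}^{\tau(0)-1}(W_j)^{p-1}=O(b^p)$, valid since $\E V_0^{p}<\infty$, yields the asserted $O(b^p(b-a))$ bound.

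The step I expect to be the main obstacle is precisely this uniform increment bound, and within it the bookkeeping of the two regeneration times $\tau^a(0)\le\tau^b(0)$: the clean paired sum runs up to $\tau^b(0)$, whereas $g_m(a)$ is a potential over $[0,\tau^a(0))$, so one must show that the residual excursion of the lower chain between $\tau^a(0)$ and $\tau^b(0)$—which restarts from $0$ and is itself controlled by a coupling whose initial gap is at most $b-a$—contributes only $O(b^p(b-a))$. Establishing this residual estimate, together with checking that every random-walk moment used stays within the range permitted by $p<r$, is where the real work lies; the centering cancellation and the final moment estimates are then routine.
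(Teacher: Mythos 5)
Your proposal takes a genuinely different route from the paper's, and its core approach is sound. Both arguments start from the same representation (your first display is the paper's (\ref{B}) with $f_p$ replaced by $f_{p;m}$) and both live off the same two structural facts: the weight $p(\theta_0+h,V_0)-1$ is centered under $\E^{\theta_0}$ and uniformly $O(h)$, and the random-walk structure converts that centering into a gain of one power of $x$. The paper cashes this in \emph{inside} the regenerative cycle sum: it introduces $\sigma_x(0)=\inf\{n\ge 1: x+S_n-V_0\le 0\}$, whose defining sequence $x+S_j-V_0=x-\chi_1+Z_2+\cdots+Z_j$ is independent of $V_0$, Taylor-expands $(x+S_j)^p$ about $x+S_j-V_0$ so that the leading term is annihilated by the centering, controls the lower-order terms (each carrying a factor $V_0^n$ with $n\ge 1$, hence one fewer power of $x$) by the cycle estimate (\ref{eq:a.6}), and bounds the leftover stretch $[\sigma_x(0),\tau_x(0))$ by a constant. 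You instead cash in the centering at the very first step---subtracting the $V_0$-independent quantity $g_m([x-\chi_1]^+)$---and transfer all the work to a uniform Lipschitz-type bound $|g_m(b)-g_m(a)|\le d''(1+b^p)(b-a)$ on the Poisson solution, proved by synchronous coupling. Your route is more conceptual and produces a reusable regularity estimate on $\Gamma f_{p;m}$, making transparent why the answer is one power of $x$ below the growth of $g$ itself; the paper's route needs no regularity of $g$ at all and handles the fractional part of $p$ mechanically through the Taylor remainder. Your restriction to $p\ge 1$ (for $f_p(y)-f_p(z)\le p\,y^{p-1}(y-z)$) is harmless since the appendix assumes $1\le p<r$, and your moment count is merely conservative: the tail $(1+\theta v)^{-r-2}$ gives $\E V_0^s<\infty$ for all $s<r+2$, not just $s<r+1$, so $\E V_0^{p+1}<\infty$ holds with room to spare.

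The one step where your sketch, as written, risks failure is exactly the residual you flag, and the danger is circularity rather than bookkeeping. If you peel the lower chain off at its regeneration time, then by the strong Markov property the residual equals $\E[g_m(Y)]$ with $Y=W^b_{\tau^a(0)}\in[0,b-a]$, and controlling this ``by a coupling whose initial gap is at most $b-a$'' means invoking the increment bound at the pair $(0,Y)$---the very bound being proved; each application produces another residual of the same form, so the recursion never closes. The clean fix is to never peel the lower chain off: let $W^a$ regenerate at $\tau^a(0)$ and keep it synchronously coupled to $W^b$ up to the coalescence time $\tau^b(0)$. Then
\begin{equation*}
g_m(b)-g_m(a)=\E\sum_{j=0}^{\tau^b(0)-1}\bigl(f_{p;m}(W^b_j)-f_{p;m}(W^a_j)\bigr),
\end{equation*}
because the two cycle sums now have the same number of terms, so the centering constants $\pi(\theta_0)f_{p;m}$ cancel exactly, and the lower chain's contribution over $[\tau^a(0),\tau^b(0))$ has mean zero: since $0\le W^a_{\tau^b(0)}\le W^b_{\tau^b(0)}=0$, the time $\tau^b(0)$ is itself a regeneration epoch of the lower chain, so this stretch consists of \emph{complete} regeneration cycles of the lower chain, each cycle sum of $f_{p;m}-\pi(\theta_0)f_{p;m}$ has zero mean by the regenerative ratio formula, and Wald's identity applies because the number of completed cycles is a stopping time for the cycle filtration with mean bounded by $\E\tau^b(0)<\infty$. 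With this identity every summand is nonnegative and at most $p\,(W^b_j)^{p-1}(b-a)$ (the pathwise gap bound $W^b_j-W^a_j\le b-a$ survives regenerations of the lower chain, and truncation only shrinks differences, giving uniformity in $m\in[0,\infty]$), while $W^b_j=b+S_j$ for $j<\tau^b(0)$, so (\ref{eq:a.6}) with $s=p-1$ yields $|g_m(b)-g_m(a)|\le p\,c_p(b^p+1)(b-a)$. With that lemma secured, the rest of your argument goes through and delivers (\ref{eq:a.1}).
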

First note that (\ref{eq:3.4}) can be established as in Section~\ref{sec:equilibrium} with $v_0(x) = x^{p+1}$ and $q(x)= x^p$ for any $p < r$, and hence, $|\Gamma f_p(x)| \leq c(x^{p+1}+1)$  for $p<r$ by \cite{glynn96}, and $f_p$ is $\pi(\theta)$-integrable for $p<r$ by \cite{glynn08}. 
Since $\Gamma f_{p;m}$ is $\pi(\theta_0+h)$-integrable (since it is bounded by an affine function), \cf{in fact, this is not the case for $p<1$} if we let $\alpha_m(\theta) \triangleq \pi(\theta)f_{p;m}$, then $\alpha_m(\theta_0+h) - \alpha_m(\theta_0) = \pi(\theta_0+h)(P(\theta_0+h)-P(\theta_0))\Gamma f_{p;m}$. 
Monotone convergence theorem guarantees that $\alpha_m(\theta_0+h) - \alpha_m(\theta_0)$ converges to $\alpha(\theta_0+h) - \alpha(\theta_0)$ as $m\to\infty$; on the other hand, applying bounded convergence twice along with \cite{glynn08} and (\ref{eq:a.1}), one can conclude that $\pi(\theta_0+h)(P(\theta_0+h)-P(\theta_0))\Gamma f_{p;m}\to \pi(\theta_0+h)(P(\theta_0+h)-P(\theta_0))\Gamma f_{p}$. Therefore, (\ref{eq:3.3a}) is valid.
Now, set $s(x) = x^p+1$ and put $\I_m(x) = \I(x\geq m)$ and $\I_m^c = \I(x <m)$. 
Then,
\begin{align}
\int_{\R_+}\pi(\theta_0+h,dx) \I_m(x) \frac{P(\theta_0+h) - P(\theta_0)}{h}\Gamma f_p(x)
&\leq d\int\pi(\theta_0+h,dx)\I_m(x)s(x)\nonumber\\
&\leq d\int\pi(\theta_0+h,dx)\frac{x^{p+\epsilon} + 1}{s(x)} s(x) \frac{m^p+1}{m^{p+\epsilon}+1}\nonumber\\
&= cd\frac{m^p+1}{m^{p+\epsilon}+1}\label{eq:a.2}
\end{align}
for $0<\epsilon <r-p$ and some constant $c>0.$ On the other hand, let
\begin{equation*}
\int_{\R_+} \pi(\theta_0+h,dx)\I_m^c(x) \frac{P(\theta_0+h)-P(\theta_0)}{h} \Gamma f_p(x) \triangleq \pi(\theta_0+h)s_h^m,
\end{equation*}
then, $s_h^m$ is bounded by $c(m^{p}+1)$ for some $c$, and hence $\Gamma s_h^m \leq a(m^{p}+1)(x+1)$ for some $a>0$. 
	\cf{
	\begin{align*}
	s_h^m(x) 
	&= \I_m^c(x) \frac{P(\theta_0+h) - P(\theta_0)}{h}\Gamma f_p(x)\\
	&= \I_m^c(x) \E_x^{\theta_0}\frac{p(\theta_0+h,V_0) - 1}{h}\Gamma f_p(W_1)	
	\end{align*}
	}
Therefore, by the same argument as in (\ref{eq:3.3a}),
\begin{align}
|\pi(\theta_0+h)s_h^m - \pi(\theta_0)s_h^m| 
&=|\pi(\theta_0+h)(P(\theta_0+h) - P(\theta_0))(\Gamma s_h^m)|\nonumber\\
&= \int \pi(\theta_0+h,dx) \E_x \Gamma s_h^m([x+V_0-\chi_1]^+)(p(\theta_0+h,V_0)-1)\nonumber\\
&
\leq hd'(m^p+1).\label{eq:a.3}
\end{align}
for some $d'>0$.
Finally,
\begin{equation*}
\pi(\theta_0)s_h^m = \int_{\R_+} \pi(\theta_0,dx)\I_m^c(x) \E_x^{\theta_0} \frac{p(\theta_0+h,V_0)-1}{h}\Gamma f_p(W_1).
\end{equation*}
For each $x$, $s_h^m(x) \to \I_m^c(x)\E_x^{\theta_0} p'(\theta_0+h,V_0)\Gamma f_p(W_1)$ as $h\to 0$ by bounded convergence.
Also, due to the definition of $\I_m^c(x)$ and boundedness of $p'$, $s_h^m$ itself is bounded w.r.t.\ $h$ and $x$.
Therefore, 
applying bounded convergence theorem we conclude that 
\begin{equation}\label{eq:a.4}
\pi(\theta_0)s_h^m \to \int_{\R_+}\pi(\theta_0,dx)\I_m^c(x)\E_x^{\theta_0} p'(\theta_0,V_0)\Gamma f_p(W_1)
\end{equation}
as $h\to 0$.
Therefore, if we let $h\to0$ and then let $m\to \infty$, (\ref{eq:a.2}), (\ref{eq:a.3}), and (\ref{eq:a.4}) imply (\ref{eq:a.0}). 

\begin{proof}{Proof of the claim.}
First note that 
\begin{align*}
P(\theta_0 + h) \Gamma f_{p;m}(x) - P(\theta_0) \Gamma f_{p;m}(x) 
&= \E^{\theta_0} \sum_{j=1}^{\tau_x(0)-1} [m\wedge(x+S_j)^p - \pi(\theta_0)f_{p;m}] (p(\theta_0+h,V_0)-1)
\end{align*}
Set $\sigma_x(0) = \inf \{n\geq 1: x + S_n - V_0 \leq 0\}$, then obviously $\sigma_x(0) \leq \tau_x(0)$.
Considering the Taylor expansion of $f_p$ up to $\lfloor p\rfloor $\textsuperscript{th} term, 
\begin{align*}
&
\sum_{j=1}^{\sigma_x(0)-1}m\wedge(x+S_j)^p
\leq \sum_{j=1}^{\sigma_x(0)-1}(x+S_j)^p
\\
&= 
\sum_{j=1}^{\sigma_x(0)-1} \left(\sum_{n=0}^{\lfloor p\rfloor-1}c_n(x+S_j-V_0)^{p-n}V_0^n
+ c_{\lfloor p\rfloor} (x+S_j - V_0)^{p-\lfloor p \rfloor}V_0^{\lfloor p \rfloor} + c_{\lfloor p \rfloor}(V_{0,j}^*)^{p-\lfloor p \rfloor}V_0^{\lfloor p \rfloor}\right)
\\
&\leq 
\sum_{j=1}^{\sigma_x(0)-1}(x+S_j-V_0)^p +
\sum_{n=1}^{\lfloor p\rfloor}\sum_{j=1}^{\sigma_x(0)-1}c_n(x+S_j-V_0)^{p-n}V_0^n + c_{\lfloor p \rfloor} V_0^p(\sigma_x(0)-1)
\end{align*}
\cf{
\begin{align*}
&
\sum_{j=1}^{\sigma_x(0)-1}m\wedge(x+S_j)^p
\leq \sum_{j=1}^{\sigma_x(0)-1}(x+S_j)^p
\\
&= 
\sum_{j=1}^{\sigma_x(0)-1} \left(\sum_{n=0}^{\lfloor p\rfloor-1}c_n(x+S_j-V_0)^{p-n}V_0^n
+ c_{\lfloor p\rfloor} (x+S_j - V_0 + V_{0,j}^*)^{p-\lfloor p \rfloor}V_0^{\lfloor p \rfloor}\right)
\\
&= 
\sum_{j=1}^{\sigma_x(0)-1} \left(\sum_{n=0}^{\lfloor p\rfloor-1}c_n(x+S_j-V_0)^{p-n}V_0^n
+ c_{\lfloor p\rfloor} (x+S_j - V_0)^{p-\lfloor p \rfloor}V_0^{\lfloor p \rfloor} + c_{\lfloor p \rfloor}(V_{0,j}^*)^{p-\lfloor p \rfloor}V_0^{\lfloor p \rfloor}\right)
\\
&\leq 
\sum_{n=0}^{\lfloor p\rfloor}\sum_{j=1}^{\sigma_x(0)-1} c_n(x+S_j-V_0)^{p-n}V_0^n + c_{\lfloor p \rfloor} V_0^p(\sigma_x(0)-1)
\\
&\leq 
\sum_{j=1}^{\sigma_x(0)-1}(x+S_j-V_0)^p +
\sum_{n=1}^{\lfloor p\rfloor}\sum_{j=1}^{\sigma_x(0)-1}c_n(x+S_j-V_0)^{p-n}V_0^n + c_{\lfloor p \rfloor} V_0^p(\sigma_x(0)-1)
\end{align*}
}%
where $c_n = \frac{p(p-1)\cdots(p-n)}{n!}$ and $0\leq V_{0,j}^* \leq V_0.$ 
Since $\sigma_x(0)$ and $(x+S_j-V_0)$ are independent of $V_0$, and $p(\theta_0+h,V_0) - 1$ is a mean zero rv, 
$\E^{\theta_0}\sum_{j=1}^{\sigma_x(0)-1}(x+S_j-V_0)^p (p(\theta_0+h),V_0) - 1) = 0$ and $\E^{\theta_0}\sum_{j=1}^{\sigma_x(0)-1}\pi(\theta_0)f_{p,m}(p(\theta_0+h),V_0) - 1) = 0$, 
and hence
\begin{align*}
&\E^{\theta_0}\sum_{j=1}^{\sigma_x(0)-1}[m\wedge (x+S_j)^p - \pi(\theta_0)f_{p;m}](p(\theta_0+h,V_0) - 1) \\
&\leq \sum_{n=1}^{\lfloor p\rfloor}c_n\E V_0^n(p(\theta_0+h,V_0)-1)\E\sum_{j=1}^{\sigma_x(0)-1}(x+S_j-V_0)^{p-n} + c_{\lfloor p \rfloor} \E (\sigma_x(0)-1)\E V_0^p(p(\theta_0+h,V_0) - 1).
\end{align*}
\cf{No, this is not right. The sign of $(p(\theta_0+h,V_0)-1)$ is not always positive.}
Note that for $s\leq p$
\begin{equation}\label{eq:a.6}
0\leq \E^\theta_0 \sum_{j=1}^{\sigma_x(0)-1}(x+S_j-V_0)^{s} 
\leq \E^\theta_0 \sum_{j=0}^{\tau_x(0)-1}(x+S_j)^{s} 
= \Gamma f_s + \pi(\theta_0)f_s\E \tau_x(0) 
\leq c_{s+1}(x^{s+1}+1).
\end{equation}
\cf{
\begin{equation}
0\leq \E^\theta_0 \sum_{j=1}^{\sigma_x(0)-1}(x+S_j-V_0)^{s} 
\leq \E^\theta_0 \sum_{j=1}^{\sigma_x(0)-1}(x+S_j)^{s} 
\leq \E^\theta_0 \sum_{j=0}^{\tau_x(0)-1}(x+S_j)^{s} 
= \Gamma f_s + \pi(\theta_0)f_s\E \tau_x(0) 
\leq c_{s+1}(x^{s+1}+1).
\end{equation}
}%
Therefore, $\E^{\theta_0}\sum_{j=1}^{\sigma_x(0)-1}[(x+S_j)^p - \pi(\theta_0)f_p](p(\theta_0+h,V_0) - 1) = O(hx^p)$. On the other hand, 
\begin{align*}
\left|\E \sum_{j=\sigma_x(0)}^{\tau_x(0)-1} (x+S_j)^p\right| 
&\leq \left|\E \sum_{j=0}^{\tau_V(0)-1} (V+S_j)^p\right|
\leq \left|\E \sum_{j=0}^{\tau_V(0)-1} (V+S_j)^p-\pi(\theta_0)f_{p-1}\right| 
+ \left|\E \sum_{j=0}^{\tau_V(0)-1}\pi(\theta_0)f_{p-1}\right| \\
&\leq |\E \Gamma f_{p-1}(V)| + |\pi(\theta_0)f_{p-1}\E \gamma(V)| < \infty
\end{align*}
where $\gamma(x) \triangleq \E \tau_x(0)$. Likewise, $\E \sum_{j=\sigma_x(0)}^{\tau_x(0)-1}\pi(\theta_0)f_p$ can be bounded by constant, and the conclusion of the claim follows. 
\end{proof}

 \section{Completeness of weighted normed spaces.}\label{appendix:B}
$L_w$, $\mathcal L_w$, and $\mathcal M_w$ are obviously linear spaces, and it is easy to see that the associated norms are legitimate norms. 
The completeness of $L_w$ is an immediate consequence of the completeness of $L_e$ (where $e(x) \equiv 1$) and that if $\{h_n\}_{n=1,\ldots}$ is a Cauchy sequence in $L_w$, then $\{h_n/w\}_{n=1,\ldots}$ is a Cauchy sequence in $L_e$, along with the fact that the point-wise limit of a measurable function is measurable. 
To see that $\mathcal L_w$ is also complete, suppose that $\{Q_n\}_{n=1,\ldots}$ is a Cauchy sequence in $\mathcal L_w$.
Then, $\{Q_n f\}_{n=1,\ldots}$ is Cauchy in $L_w$ for any fixed $f$. 
The completeness of $L_w$ guarantees that there exists $\phi_f\in L_w$ such that $\|Q_nf-\phi_f\|_w \to 0$. Define $Q$ so that $Qf \triangleq \phi_f$. 
From this construction, $Q$ is obviously a linear operator.
Now, to show that $Q$ is the limit of $Q_n$ w.r.t. $\vertiii{\cdot}_w$, note that for any given $\epsilon>0$, one can choose $N$ such that $n,m\geq N$ implies $\|Q_nf - Q_mf\|_w \leq \epsilon$ for all $f$ such that $\|f\|_w=1$. 
Noting that
$$
\|Q_n f - Qf\|_w \leq \|Q_nf - Q_mf\|_w + \|Q_mf - Qf\|_w \leq \epsilon + \|Q_mf - Qf\|_w,
$$
and taking $m\to\infty$, one concludes that $n\geq N$ implies 
$$
\|Q_n f - Qf\|_w\leq \epsilon \text{ for all } f \text{ such that } \|f\|_w = 1.
$$
That is, $Q_n \to Q$ in $\vertiii{\cdot}_w$.
An immediate consequence is that $Q$ is bounded, and hence, we are left with showing that $Q$ is a genuine kernel. The measurability of $Q(\cdot,A)$ (for each $A\subseteq B$) is obvious since $Q(\cdot,A)$ is a pointwise limit of $Q_n(\cdot,A)$.
To show that $Q(x,\cdot)$ is sigma additive (and hence, it is indeed a measure) for each fixed $x$, suppose that $\{E_i\}_{i=1,\ldots}$ is a countable collection of disjoint measurable sets.
\begin{align*}
Q(x,\cup_i E_i) 
&
= \lim_{n\to\infty} Q_n(x,\cup_i E_i)
= \lim_{n\to\infty} \sum_i Q_n(x,E_i)
= \sum_i\lim_{n\to\infty} Q_n(x,E_i)
= \sum_i Q(x,E_i),
\end{align*}
\sloppy where the third equality is from bounded convergence along with the fact that 
$\sum_{i}|Q_n(x,E_i)| = \int \sum_i \text{sgn} (Q_n(x,E_i)) \I_{E_i}(y)Q_n(x,dy)$ is bounded (since $\|\sum_i\text{sgn}(Q_n(x,E_i))\I_{E_i}(\cdot)\|_w \leq 1$ and $Q_n$ is convergent in $\vertiii{\cdot}_w$) for sufficiently large $n$'s.
Since $0 = Q_n(x,\emptyset) \to Q(x,\emptyset)$, $Q$ is indeed a kernel.
The completeness of $\mathcal M_w$ follows from a similar (but easier) argument.

\bibliographystyle{apalike}
\bibliography{LyapunovConditions_ref}

\end{document}